\newcommand{\sk}{{\ensuremath{\sf k }}}
\DeclareMathOperator{\del}{del}
\DeclareMathOperator{\lk}{lk}
\DeclareMathOperator{\Tor}{Tor}
\newtheorem{conjecture}{ Conjecture}[section]
\newtheorem{theorem}[conjecture]{ Theorem}
\newtheorem{lemma}[conjecture]{ Lemma}
\newtheorem{corollary}[conjecture]{ Corollary}
\theoremstyle{definition}
\newtheorem{definition}[conjecture]{ Definition}
\newtheorem{example}[conjecture]{ Example}
\newtheorem{remark}[conjecture]{ Remark}
\providecommand{\Soder}{\ensuremath{\text{S}\ddot{\text{o}}\text{derberg}} }
\renewcommand\dim{\text{\rm dim}}
\providecommand\Tor{{\rm Tor}}
\providecommand\reg{{\rm reg}}
\begin{document}
\title{Herzog, Hibi and Ohsugi conjecture for trees}

\author[A. Kumar]{Ajay Kumar}


\email{ajay.kumar@iitjammu.ac.in}

\author[R. Kumar]{Rajiv Kumar}

\email{gargrajiv00@gmail.com}
\address{Indian Institute of Technology Jammu, India.}

\date{\today}

\subjclass[2020]{Primary 13C14, 13D02, 05E40}

\keywords{Componentwise linear, vertex decomposable, regularity, sequentially Cohen-Macaulay, vertex cover ideal, symbolic powers}

\maketitle
\begin{abstract}
	Let $S=\mathbb{K}[x_1,\dots, x_n]$ be a polynomial ring, where $\mathbb{K}$ is a field, and $G$ be a simple graph on $n$ vertices. Let $J(G)\subset S$ be the vertex cover ideal of $G$. Herzog, Hibi and Ohsugi have conjectured that all powers of vertex cover ideals of chordal graph are componentwise linear. Here we establish the conjecture for the special case of trees. 
	We  also show that if $G$ is a unicyclic vertex decomposable graph that does not contain $C_3$ or $C_5$, then symbolic powers of $J(G)$ are componentwise linear. 
\end{abstract}
\section{Introduction}
Let $\mathbb{K}$ be a field and $S=\mathbb{K}[x_1,\dots, x_n]$ be a polynomial ring, $n \in \mathbb{N}_{>0}$, where $\mathbb{N}_{>0}$ denotes the set of positive integers. Set $\mathbb{N}=\mathbb{N}_{>0} \cup \{0\}$. Let $G$ be a simple graph with vertex set $V(G)=\{x_1,\ldots,x_n \}$ and edge set $E(G)=\{\{x_i,x_j\}:x_i,x_j \in V(G)  \}$. Then one can associate an \emph {edge ideal} $I(G) \subset S$ to $G$ generated by all monomials $x_ix_j$ such that $\{x_i,x_j \} \in E(G)$. The Alexander dual of $I(G)$, i.e., 
$J(G)=I(G)^{\vee}=\bigcap\limits_{\{x_i,x_j \} \in E(G)} \langle x_i,x_j\rangle$, is called the \emph{vertex cover ideal} of $G$. A graph $G$ is said to be vertex decomposable/shellable if its independence complex $\Delta(G)$ has this property. A graph $G$ is called (sequentially) Cohen-Macaulay, if the quotient ring $S/I(G)$ is (sequentially) Cohen-Macaulay. For a graph $G$, the following implications are known:
$$ \textrm {vertex decomposable} \implies \textrm{shellable} \implies \textrm{ sequentially Cohen-Macaulay}.$$ 

Eagon and Reiner \cite{Eagon} showed that a graph is Cohen-Macaulay if and only if its vertex cover ideal has a linear resolution. More generally in \cite{HHibi}, Herzog and Hibi proved that a graph is sequentially Cohen-Macaulay if and only if its vertex cover ideal is componentwise linear (see Definition \ref{component}). R\"{o}mer  \cite{Romer} observed that if Char(${\mathbb{K}}$)$=0$, then the multiplicity Conjecture due to Herzog, Huneke and Srinivasan holds for componentwise linear ideals. In 2009, this conjecture was solved by Boij-\Soder \cite{BS} and Eisenbud-Schreyer \cite{ESF}. In \cite{HRW99}, authors have proved that componentwise linear ideals are Golod.  Thus, one would like to find some classes of ideals having componentwise linear resolution. In particular, one may be interested in finding some combinatorial conditions on certain combinatorial objects (simplicial complex, graph) such that the corresponding associated ideals have (componentwise linear) linear resolution. Authors in \cite{fv} proved that the vertex cover ideal of a chordal graph is always componentwise linear. In \cite{HHO2011}, Herzog, Hibi and Ohsugi studied powers of vertex cover ideals of graphs and and proposed the following conjecture.
\begin{conjecture}\label{conjecture}
	Let $G$ be a chordal graph. Then all powers of the vertex cover ideal of $G$ are componentwise linear.
\end{conjecture}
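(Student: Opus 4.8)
The plan is to establish the stronger statement that for every chordal graph $G$ and every $k\ge 1$ the ordinary power $J(G)^k$ has \emph{linear quotients}, which suffices because a monomial ideal with linear quotients is componentwise linear. Linear quotients is the correct target here: $J(G)^k$ is not generated in a single degree, since its generators are products of $k$ of the monomial generators of $J(G)$, and those correspond to minimal vertex covers of varying sizes, so one should not expect a linear resolution but only componentwise linearity. The case $k=1$ is the theorem of Francisco and Van Tuyl \cite{fv}, which serves as the base of the induction.

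The induction is on the pair $(n,k)$, with $n=|V(G)|$, ordered lexicographically. Since $G$ is chordal it has a simplicial vertex $v$, so $F=N[v]$ is a clique. I would stratify the minimal generators of $J(G)^k$ by the exponent $a_v$ of $x_v$ and read off a Betti splitting (in the sense of Francisco, H\`a and Van Tuyl) of $J(G)^k$ in which one part is governed by generators supported away from $v$, controlled by $J(G\setminus v)^k$, while the complementary part records covers that must spend weight on $v$; by simpliciality this forces matching weight on the clique $F\setminus\{v\}$ and reduces to cover ideals of the smaller graphs $G\setminus v$ and $G\setminus F$ with exponent at most $k$. Applying the inductive hypothesis to these pieces produces linear-quotient orders on each stratum; splicing them in order of increasing $a_v$ is designed so that the colon of a later generator against the earlier ones is generated by variables coming either from the clique $F$ or from a neighbour forced by an uncovered edge.

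The main obstacle lies in the cross-stratum colon computation, and it has a precise source: for a non-bipartite chordal graph the ordinary and symbolic powers differ. Indeed $J(G)^{(k)}=J(G)^k$ for all $k$ precisely when $G$ is bipartite (equivalently, $I(G)$ is normally torsion-free), whereas every chordal graph with a triangle is non-bipartite. The symbolic power has a transparent description, namely $\prod_i x_i^{a_i}\in J(G)^{(k)}$ iff $a_i+a_j\ge k$ for every edge $\{x_i,x_j\}$, so the exponent vector is a $k$-cover, and with this description the clique-level exchange argument (decreasing $a_v$ and compensating on $F\setminus\{v\}$ while staying in the ideal) runs smoothly; this is essentially why the symbolic statement is within reach and why the bipartite case, including trees, is tractable. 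For ordinary powers inside a clique one must instead control the genuinely smaller ideal $J(G)^k\subsetneq J(G)^{(k)}$, whose minimal generators are constrained by how a $k$-cover factors into $k$ actual minimal covers; guaranteeing that such a factorization survives the exchange, so that the spliced order still has variable-generated colons, is the crux of the problem.

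To overcome this I would try to bridge the two filtrations through the Rees and symbolic Rees algebras of $J(G)$: if for chordal $G$ the graded pieces of $J(G)^k$ can be obtained from those of $J(G)^{(k)}$ by a clique-by-clique reduction compatible with the simplicial order, for instance by pinning down exactly which $k$-covers factor into $k$ minimal covers, then the symbolic exchange lemma transports to the ordinary setting. Failing a uniform such statement, a more modest but still conclusive route is to run the double induction entirely on the ordinary side, replacing the $k$-cover inequalities by an explicit generating set for $J(G)^k$ adapted to a perfect elimination ordering and verifying the linear-quotient colons directly; there the expected difficulty is bookkeeping made heavy by the fact that a simplicial vertex interacts with its whole clique neighbourhood at once, in contrast to the single-neighbour bookkeeping that makes the leaf induction for trees clean.
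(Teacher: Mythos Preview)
The statement you are addressing is a \emph{conjecture}, not a theorem of the paper. The paper does not prove it; it establishes only the special case of trees (Corollary~\ref{treeCor}) and, for symbolic powers, certain unicyclic graphs. So there is no ``paper's own proof'' to compare against, and any complete argument for general chordal $G$ would be a new result, not a reproduction.

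Your proposal is honest about this: you identify the crux yourself. The plan---strip off a simplicial vertex, stratify $J(G)^k$ by the exponent on $x_v$, and splice linear-quotient orders coming from $J(G\setminus v)^k$ and $J(G\setminus F)$---is reasonable heuristics, and it does work smoothly on the symbolic side because membership in $J(G)^{(k)}$ is the edgewise inequality $a_i+a_j\ge k$, under which the clique exchange is transparent. But for ordinary powers the step ``guaranteeing that such a factorization survives the exchange'' is exactly the open content of the conjecture once a triangle is present, and you have not supplied it. The two fallback routes you sketch (transporting the symbolic exchange via a comparison of Rees and symbolic Rees algebras, or running the induction directly with an explicit generating set adapted to a perfect elimination order) are programmatic rather than arguments; neither comes with the lemma that would close the cross-stratum colon computation. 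As written, the proposal is an outline with an acknowledged gap at its center, not a proof.

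By contrast, the paper sidesteps this difficulty entirely by restricting to trees, where $J(G)^{(k)}=J(G)^k$, and by working combinatorially: it introduces the graphs $G(\mathbf{k}_t)$, shows that the polarization of $J(G)^{(k)}$ is $J(G(\mathbf{k}))$, and proves $G(\mathbf{k}_{n-1})$ is vertex decomposable by a leaf-and-neighbour induction (Theorem~\ref{tree}); linear quotients and componentwise linearity then follow from standard translations. None of that machinery touches the non-bipartite obstacle you flagged, which is why the paper stops at trees.
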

There has been  very little progress made on this conjecture except for very few classes like  generalized star graphs, Cohen-Macaulay chordal graphs (see \cite{HHO2011,Mohammadi}). Authors in \cite{EQ2019} show that the second power of the vertex cover ideal of a path is componentwise linear. They also ask whether powers of the vertex cover ideal of a chordal graph have linear quotients or not? This question is a stronger version of the conjecture stated above. For some particular cases the above conjecture has been studied  by various authors (see \cite{Er2,Mohammadi,FM,Selva2019}).   

Authors in \cite{GRV2005} show that for a bipartite graph  $J(G)^{(k)}=J(G)^k$, where  $J(G)^{(k)}$ denotes the $k$th symbolic power of $J(G)$. Thus, to study Conjecture \ref{conjecture} for trees, one can consider symbolic powers of the associated vertex cover ideal. 
For a given graph $G$, Fakhari \cite{Fakhari} introduced a new graph $G({\bf k})$, and showed that  the polarization of $J(G)^{(k)}$ is the vertex cover ideal of $G({\bf k})$.
He also describes a relationship between algebraic properties, e.g., Cohen-Macaulayness, very well covered, of  graphs $G$ and $G({\bf k}).$ 
As a consequence, he observed that if $G$ is a Cohen-Macaulay and very well covered graph, then symbolic powers of the vertex cover ideal of $G$ have linear quotients, and hence are componentwise linear. 

The problem of finding the regularity of edge ideals and vertex cover ideals has been extensively studied since the last decade. For a graded ideal $I$ of a ring $S$, it is well known that $\reg(I^s)$ is a linear function of $s$ for  $s\gg 0$, i.e. there exist non-negative integers $a,b$ and $s_0$ such that $\reg(I^s)=as+b$ for all $s \geq s_0$ (see \cite{CHT, vijay}). Although the constant $a$ is given by the maximum degree of minimal generators of $I$, no explicit formula for $b$ and $s_0$ is known. The problem of computing the bounds for the regularity of (symbolic) powers of the vertex cover ideal of a graph has been studied by many researchers (see \cite{Er2, KKSS, Selva2019, Fakhari,  Fakhari2019}).

In this article, our main focus is to address Conjecture \ref{conjecture} for trees. For this, we study the vertex decomposable property of the graph $G({\bf k}).$ In Example \ref{example3}, we see that there exists a graph $G$ for which $G({\bf k})$ is not a vertex decomposable graph. 
For a given graph $G$, we introduce a new construction $G(\mathbf{k}_t)$ which  generalizes the construction of $G({\bf k})$, and this helps us to understand the vertex decomposability of $G({\bf k})$. Further, we prove that $G(\mathbf{k}_t)$ is vertex decomposable when $G$ is a tree. Hence we solve Conjecture \ref{conjecture} for trees. For a given vertex decomposable graph $G$, we observe that $G(\mathbf{k}_t)$ need not be vertex decomposable (see Examples \ref{example} and \ref{example2}).

We now give a brief overview of this paper. In Section \ref{section2}, we introduce basic notions of graph theory and commutative algebra. In Section \ref{sec3}, we settle Conjecture \ref{conjecture} for trees, which is a main result of this article (see Theorem \ref{tree}). 

In Section \ref{section4}, we prove that for a vertex decomposable unicyclic graph $G$ with cycle $C_n$, where $n \neq 3,5$, $G({\bf k})$ is vertex decomposable (see Theorem \ref{unicyclic}). Further, we know that the regularity of a componentwise linear ideal can be determined by the maximum degree of its minimal generators. As a consequence, we find the regularity of symbolic powers of vertex cover ideals of some classes of vertex decomposable graphs. 
\section{Preliminaries}\label{section2}

\begin{definition}
	A \emph{simplicial complex} $\Delta$ on the vertex set $V$ is a collection of subsets of $V$ which satisfy the following:
	\begin{enumerate}[i)]
		\item For $x\in V$, $\{x\}\in \Delta$.
		\item If $F\in \Delta$ and $F'\subset F$, then $F'\in \Delta$.
	\end{enumerate}
	An element of $\Delta$ is called a \emph{face} of $\Delta$ and a maximal face of $\Delta$ with respect to inclusion is called a \emph{facet} of $\Delta$.
\end{definition}
\begin{definition} Let $\Delta$ be a simplicial complex on the vertex set $V$.
	\begin{enumerate}[a)]
		\item For a face $F$ of $\Delta$, the \emph{deletion} of $F$, denoted as  $\del_{\Delta}(F)$, is a simplicial complex is defined as
		$$\del_{\Delta}(F)=\{H\in \Delta: H\cap F=\phi\}.$$
		\item Let $F\in \Delta$. Then the \emph{link} of $F$, denoted as $\lk_{\Delta}(F)$, is a simplicial complex is defined as
		$$\lk_{\Delta}(F)=\{H\in \Delta: H\cup F\in \Delta,  H\cap F=\phi\}.$$
		\item A simplicial complex $\Delta$ is said to be \emph{vertex decomposable} if it is either a simplex or else has some vertex $x$ such that
		\begin{enumerate}[i)]
			\item $\del_{\Delta}(x)$ and $\lk_{\Delta}(x)$ are vertex decomposable, and
			\item  no face of $\lk_{\Delta}(x)$ is a facet of $\del_{\Delta}(\{x\})$.
		\end{enumerate}
		A vertex $x$ which satisfies Condition (ii) is called a \emph{shedding vertex}.
		\item A simplicial complex $\Delta$ is called \emph{shellable} if there exists a linear order $F_1,\ldots,F_r$ of all facets of $\Delta$ such that for all $1 \leq i<j \leq r$, there exist $x \in F_j \setminus F_i$ and $s \in \{1,\ldots,j-1 \}$ with $F_j \setminus F_s=\{x\}$. 
	\end{enumerate}
	
\end{definition}
 \begin{definition} Let $G$ be a simple graph with the vertex set $V(G)$ and the edge set $E(G)$.
\begin{enumerate}[i)]
\item A subset $A \subset V(G)$ is called a \emph{vertex cover} of $G$, if $A \cap \{x,y\} \neq \emptyset$ for any $\{x,y \} \in E(G)$ and it is called \emph{minimal} if for any $a \in A$, $A \setminus a$ is not a vertex cover of $G$.
\item A subset $C$ of $V(G)$ is called an \emph{independence set} of $G$ if $\{x,y\} \notin E(G)$ for any $x,y \in C$. The collection  $\Delta(G)$ of all independent sets of $G$ is a simplicial complex on the vertex set $V(G)$, called the \emph{independent complex} of $G$.
\item Let $x \in V(G)$. Then an edge $\{x,y \}$ obtained by adding a new vertex $y$ at $x$ is called a \emph{whisker} of $G$.
\item Let $K \subset V(G)$. Then by $G \setminus K$, we mean the induced subgraph of $G$ on $V(G) \setminus K$.
\end{enumerate}
\end{definition}
 For a vertex $x \in V(G)$, the open neighborhood of $x$ in $G$ is defined as $N_G(x)=\{y \in G:\{x,y \} \in E(G) \}$, and   $N_G[x]=N_G(x)\cup \{x\}$ is called the closed neighborhood of $x$ in $G$. A graph $G$ is said to be a \emph{vertex decomposable} (resp. \emph{shellable})  graph if the independent complex $\Delta(G)$ is  vertex decomposable (resp. shellable). Thus the definition of a vertex decomposable simplicial complex translates to a vertex decomposable graph (see \cite{Wood2009}) as following.
 \begin{definition} A graph $G$ is said to be \emph{vertex decomposable} if it has no edges or there is a vertex $x$ in $G$ such that 
 \begin{enumerate}[i)]
 \item $G \setminus \{ x\}$ and $G \setminus N_G[x]$ are vertex decomposable, and
 \item for every independent set $C$ in $G \setminus N_G[x]$, there exists some $y \in N_G(x)$ such that $C \cup \{y\}$ is independent in $G \setminus \{x\}$.
 
\end{enumerate}  
 \end{definition}
 \begin{definition}\label{puredef}
	{\rm Let $M$ be a finitely generated $\mathbb{Z}$-graded $S$-module.
		\begin{enumerate}[i)] 
			\item Then $\beta^S_{i,j}(M) = (\dim_\sk(\Tor_i^S(M,\sk))_j$ is called the $(i,j)^{th}$ \emph{graded Betti number} of $M$. 
			\item The \emph{regularity} of $M$, denoted as $\reg(M)$, is defined as $$\reg(M)=\max\{j-i:\beta^S_{i,j}(M)\neq 0\}.$$
			\item A module $M$ is said to have a \emph{linear resolution}, if for some integer $d$,  $\beta_{i,i+b}=0$ for all $i$ and every $b \neq d$.
			\item A module $M$ is called \emph{sequentially Cohen-Macaulay} if there is a finite filtration of graded $S$-modules $0=M_0 \subset M_1 \subset \cdots \subset M_t=M$ such that all $M_i/M_{i-1}$ are Cohen-Macaulay, and the Krull dimensions of their quotients satisfy 
			$$ \dim(M_1/M_0)<\dim(M_2/M_1)<\cdots<\dim(M_t/M_{t-1}).$$
		\end{enumerate}	 
}\end{definition}
	A graph $G$ is said to be sequentially Cohen-Macaulay over $\mathbb{K}$ if $S/I(G)$ is sequentially Cohen-Macaulay.		 
\begin{definition}\label{component}
Let $I$ be a graded ideal of $S$. Then $I_{<j>}$ denotes the ideal generated by all degree $j$ elements of $I$. An ideal $I$ is called \emph{componentwise linear} if $I_{<j>}$ has a linear resolution for all $j$.
\end{definition} 
The following result of Herzog and Hibi  establishes a connection between the notions of componentwise linear ideals and sequentially Cohen-Macaulayness.
\begin{lemma}[Herzog and Hibi, \cite{HHibi}]\label{HH} Let $I$ be a squarefree monomial ideal of $S$. Then $S/I$ is sequentially Cohen-Macaulay if and only if $I^{\vee}$ is componentwise linear.
\end{lemma}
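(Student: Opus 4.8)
The plan is to reduce the statement to simplicial-complex combinatorics and then chain together Alexander duality, the Eagon--Reiner theorem, and Duval's skeletal characterization of sequential Cohen--Macaulayness. Since $I$ is squarefree, write $I=I_\Delta$ for the Stanley--Reisner ideal of a simplicial complex $\Delta$ on the vertex set $[n]=\{1,\dots,n\}$; then $I^\vee=I_{\Delta^\vee}$ for the Alexander dual complex $\Delta^\vee$, and $S/I_\Delta$ is sequentially Cohen--Macaulay exactly when $\Delta$ is. So it suffices to prove: $\Delta$ is sequentially Cohen--Macaulay $\iff$ $I_{\Delta^\vee}$ is componentwise linear. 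I would also first recall (and cite) the fact that a squarefree monomial ideal is componentwise linear if and only if, for every $j$, the ideal $I_{[j]}$ generated by the squarefree monomials of degree $j$ in it has a linear resolution; this lets me work with squarefree strands throughout.

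The combinatorial heart is the identity $(I_{\Delta^\vee})_{[j]}=I_{(\Delta^{[n-j-1]})^\vee}$ for every $j$, where $\Delta^{[i]}$ denotes the pure $i$-skeleton of $\Delta$, i.e. the subcomplex whose facets are exactly the $i$-dimensional faces of $\Delta$. I would prove this by unwinding definitions: $I_{\Delta^\vee}$ is generated by the monomials $x_{[n]\setminus F}$ as $F$ ranges over the facets of $\Delta$, so a squarefree monomial $x_G$ of degree $j$ lies in $I_{\Delta^\vee}$ iff $[n]\setminus G$ is contained in some facet of $\Delta$, i.e. iff $[n]\setminus G\in\Delta$; and $[n]\setminus G$ then has dimension $n-j-1$, so $x_G$ is precisely a generator of $I_{(\Delta^{[n-j-1]})^\vee}$. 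Comparing generating sets gives the claimed equality.

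With that identity in hand the proof is a string of equivalences. By Eagon--Reiner, $\Delta^{[i]}$ is Cohen--Macaulay over $\sk$ if and only if its Alexander dual ideal $I_{(\Delta^{[i]})^\vee}$ has a linear resolution. Letting $i$ run over all values (equivalently $j=n-i-1$ over all degrees) and applying the identity, this says: every pure skeleton $\Delta^{[i]}$ is Cohen--Macaulay $\iff$ every squarefree strand $(I_{\Delta^\vee})_{[j]}$ has a linear resolution $\iff$ $I_{\Delta^\vee}$ is componentwise linear. Finally, by Duval's theorem, $\Delta$ is sequentially Cohen--Macaulay if and only if all of its pure skeleta $\Delta^{[i]}$ are Cohen--Macaulay. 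Concatenating these equivalences yields the lemma.

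The main obstacle is not any single deep estimate but assembling the right external inputs and getting the bookkeeping exactly right: one must handle the dimension/degree shift in the skeleton--strand dictionary (the $j\leftrightarrow n-j-1$ correspondence, together with the edge cases where a skeleton is empty or a single vertex, where the relevant strand ideal is zero and the linear-resolution condition is vacuous), and one genuinely relies on two nontrivial results from the literature — the Eagon--Reiner correspondence between Cohen--Macaulayness and linear resolutions, and Duval's characterization of sequential Cohen--Macaulayness via purity of skeleta — along with the reduction of componentwise linearity to its squarefree strands. If a self-contained argument were wanted, proving Duval's characterization (or an equivalent filtration statement matching Definition \ref{puredef}) would be the most substantial piece.
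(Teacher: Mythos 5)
The paper does not prove this lemma at all: it is quoted verbatim from Herzog and Hibi \cite{HHibi} as an external result, so there is no internal proof to compare against. Your argument is correct and is essentially the standard proof of that cited theorem — reduce to the Stanley--Reisner setting, use the identity $(I_{\Delta^{\vee}})_{[j]}=I_{(\Delta^{[n-j-1]})^{\vee}}$ together with the Herzog--Hibi reduction of componentwise linearity to squarefree strands, then apply Eagon--Reiner and Duval's skeleton characterization of sequential Cohen--Macaulayness — with the only caveat that those three ingredients are themselves substantial results you are importing rather than proving.
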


\begin{definition}
Let $I$ be a monomial ideal of $S$. Then $I$ is said to have \emph{linear quotients} if there is an ordering $u_1,\ldots,u_r$ of minimal generators of $I$ such that $\langle u_1,\ldots,u_{i-1}\rangle:\langle u_i\rangle $ is generated by a subset of $\{x_1,\ldots,x_n \}$ for all $i$.
\end{definition}
The following lemma shows that the concept of linear quotient is very useful to determine if an ideal has a linear resolution. 
\begin{lemma}\cite[Proposition 8.2.1]{Herzog'sBook}
	Let $I \subset S$ be a graded ideal generated in one degree and  $I$ has linear quotients. Then $I$ has a linear resolution.
\end{lemma}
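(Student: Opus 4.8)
The plan is to argue by induction on the number $r$ of minimal monomial generators of $I$. Fix an ordering $u_1,\dots,u_r$ realizing the linear quotients, so that each $u_i$ has degree $d$ and each colon ideal $\langle u_1,\dots,u_{i-1}\rangle:\langle u_i\rangle$ is generated by a subset of $\{x_1,\dots,x_n\}$. If $r=1$ then $I=\langle u_1\rangle$ is principal and $0\to S(-d)\to I\to 0$ is a $d$-linear resolution, so we may assume $r>1$.

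For the inductive step, put $I'=\langle u_1,\dots,u_{r-1}\rangle$. Then $I'$ is again generated in degree $d$ and has linear quotients for the induced ordering, so by the inductive hypothesis $I'$ has a $d$-linear resolution; in particular $\reg(S/I')=d-1$. Set $L=I':\langle u_r\rangle$. By hypothesis $L$ is generated by variables, and since $u_r$ is a minimal generator of $I$ we have $u_r\notin I'$, so $L$ is a proper ideal; hence $S/L$ is a polynomial ring (in the variables not among the generators of $L$) and $\reg(S/L)=0$. Multiplication by $u_r$ on $S/I'$ has image $u_rS/(u_rS\cap I')\cong (S/L)(-d)$ and cokernel $S/I$, which yields the short exact sequence
\[
0\longrightarrow (S/L)(-d)\xrightarrow{\;\cdot\, u_r\;} S/I'\longrightarrow S/I\longrightarrow 0 .
\]

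Now apply the standard inequality $\reg(S/I)\le\max\{\reg(S/I'),\,\reg\big((S/L)(-d)\big)-1\}$, obtained from the long exact sequence of $\Tor_\bullet(-,\sk)$ attached to this short exact sequence. Since $\reg\big((S/L)(-d)\big)=d+\reg(S/L)=d$ and $\reg(S/I')=d-1$, we get $\reg(S/I)\le d-1$, i.e.\ $\reg(I)\le d$. On the other hand $I$ is generated in degree $d$, so $\reg(I)\ge d$ and therefore $\reg(I)=d$. Finally, a graded ideal generated in a single degree $d$ satisfies $\beta^S_{i,j}(I)=0$ for $j<i+d$; together with $\reg(I)=d$ this gives $\beta^S_{i,j}(I)=0$ for all $j\neq i+d$, i.e.\ $I$ has a $d$-linear resolution.

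Nothing here is deep; the content is bookkeeping of degree shifts. The points needing a little care are the identification $\ker(\cdot u_r\colon S/I'\to S/I')\cong (S/L)(-d)$; the position of the ``$-1$'' on the submodule term in the regularity estimate, which one reads off the correct spot of the Tor long exact sequence; and the remark that $L$ is proper, so that $S/L$ is genuinely a polynomial ring of regularity $0$ --- this is exactly where minimality of $u_r$ in $I$ enters. I therefore expect no serious obstacle. An equivalent, more constructive route is to build a (generally non-minimal) graded free resolution of $S/I$ by iterating mapping cones along the chain $\langle u_1\rangle\subset\langle u_1,u_2\rangle\subset\cdots\subset I$, grafting at the $i$-th stage the Koszul complex on the variables generating $\langle u_1,\dots,u_{i-1}\rangle:\langle u_i\rangle$ shifted into internal degree $d$; a degree count then shows the total complex, hence the minimal resolution of $I$, is $d$-linear.
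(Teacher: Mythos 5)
Your argument is correct. The paper does not prove this lemma---it is quoted from \cite[Proposition 8.2.1]{Herzog'sBook}---and your proof is essentially the standard one given there: induct on the number of generators, use the exact sequence $0\to (S/L)(-d)\to S/I'\to S/I\to 0$ with $L=I':\langle u_r\rangle$ generated by variables, and read off the degree bounds from the long exact sequence of $\Tor$; all the delicate points (minimality of $u_r$ giving $L\neq S$, the shift by $-1$ on the submodule term, and the vanishing $\beta_{i,j}(I)=0$ for $j<i+d$ from generation in degree $d$) are handled correctly.
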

\begin{definition}
	Let $I$ be a squarefree monomial ideal in $S$ with irredundant primary decomposition $I=\mathfrak{p}_1 \cap \cdots \cap \mathfrak{p}_r,$ where $\mathfrak{p}_i$ is an ideal generated by some variables in $S$. Then for $s \in \mathbb{N}_{>0},$ the $s$th \emph{symbolic power} of $I$, denoted by $I^{(s)}$, is defined as follows:
	$$I^{(s)}= \mathfrak{p}^s_1 \cap \cdots \cap \mathfrak{p}^s_r.$$
\end{definition}
The concept of polarization is a very useful tool to convert a monomial ideal into a squarefree  monomial ideal.
\begin{definition}
Let $u=\prod_{i=1}^{n}x_i^{m_i}$ be a monomial. Then the \textit{polarization} of $u$ in $T$ is the squarefree monomial $\widetilde{u}=\prod_{i=1}^{n}\prod_{j=1}^{m_i}x_{ij}$, where $T=\mathbb{K}[x_{11},x_{12},\ldots,x_{21},x_{22},\ldots,x_{n1},x_{n2},\ldots]$. If $I$ is a monomial ideal of $S$ generated by monomials $u_1,\ldots,u_m$, then the squarefree monomial ideal $\widetilde{I}=\langle \widetilde{u}_1,\dots, \widetilde{u}_m\rangle\subset T$ is called the polarization of $I$.
\end{definition}

\section{Powers of vertex cover ideals of trees}\label{sec3}
Fakhari \cite{Fakhari} introduces a new construction of graphs to obtain a Cohen-Macaulay very well covered graph from a given arbitrary graph $G$. Here we introduce a new construction of graphs to obtain a vertex decomposable graph from a given tree or unicyclic graph with cycle $C_n$, $n \neq 3,5$.\\
{\bf Notation}: The graph obtained from a given graph $G$ by deleting its isolated vertices is denoted by $G^{\circ}.$
	
{\bf Construction}:
Let $G$ be a simple graph with the vertex set $V=\{x_1,\ldots,x_n  \}$ and edges $E(G)=\{e_1,\ldots,e_t \}$. Let $p \in \mathbb{N}_{>0}$. Then for an edge $e=\{x_i,x_j \}$, we define a graph $e(p)$ with vertices $V(e(p))=\{x_{s,a}:s\in \{i,j \}, 1\leq a \leq p \}$ and edge set $E(e(p))=\{\{x_{i,l},x_{j,m}\}:l+m\leq p+1\}$. By convention, we set $e(0)$ to be an isolated  graph on vertices $V(e(0))=\{ x_{i,1},x_{j,1} \}$. Consider an ordered tuple $\mathbf{k}_t=(k_1,\ldots,k_t) \in \mathbb{N}^t$. 
Define a graph $G(\mathbf{k}_t)=G(k_1,\ldots,k_t)$ on new vertices
 $V(G(\mathbf{k}_t))=\cup_{i=1}^tV(e_i(k_i))$
  and the edge set $E(G(\mathbf{k}_t))= \cup_{i=1}^t E(e_i(k_i))$.

\begin{remark}\label{treeRem} Let $G$ be a tree on the vertex set $V(G)=\{x_1,\ldots,x_n \}$ and the edge set $E(G)=\{e_1,\ldots,e_{n-1} \}$.
	\begin{enumerate}[i)]
\item  Let $x_a \in V(G)$ be a vertex of degree $1$ and $x_b$ be the unique neighbour of $x_a$. Without loss of generality, we assume that $N_G(x_b)=\{x_a=x_{b_0},x_{b_1},\ldots,x_{b_r} \}$ with $\deg(x_{b_q})=1,$ for $0 \leq q \leq s$ and $s \leq r$. Let  $e_{c_q}=\{x_b,x_{b_q} \}$, where $q \in \{0,\ldots, r\}$. Further, assume that the edges incident with a vertex $x_{b_q}$ other than $e_{c_q}$ are $e_{i_{q-s-1}+1},\ldots,e_{i_{q-s}}$ for $q \in \{s+1,\ldots, r \}.$ 
	\item  Set $k=\max\{k_{c_0},\dots,k_{c_r} \}$. By deleting a vertex $x_{b,1}$ from $G(\mathbf{k}_{n-1})$ and identifying $x_{b,j}$ with $x_{b,j-1}$ for all $2 \leq j \leq k $, we get $$\left(G(\mathbf{k}_{n-1}) \setminus \{x_{b,1}  \}\right)^{\circ}  \simeq \left(G(k_1,\ldots,k'_{c_0},\ldots,k'_{c_r},\ldots,k_{n-1})\right)^\circ,$$ where $k'_{c_i}=\max \{0,k_{c_i}-1\}$.
	\end{enumerate}
We illustrate above remark with the help of following figures.
\end{remark}
\begin{minipage}{\linewidth}
	\begin{minipage}{.5\linewidth}
		\begin{figure}[H]
			\centering
			\includegraphics[scale=0.7]{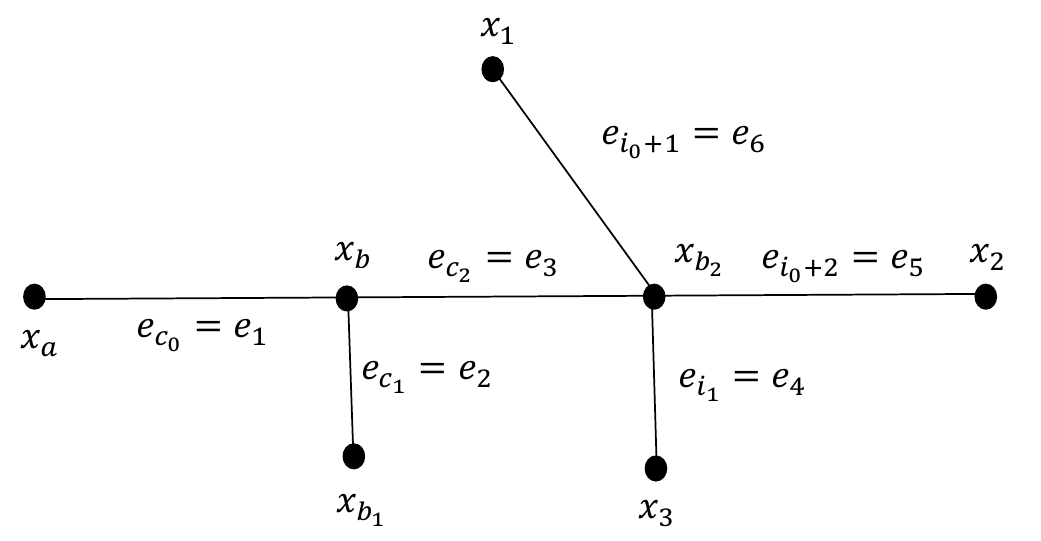}
			\caption*{$G$}
		\end{figure}
	\end{minipage}
\begin{minipage}{.4\linewidth}
	\begin{figure}[H]
		\centering
		\includegraphics[scale=0.55]{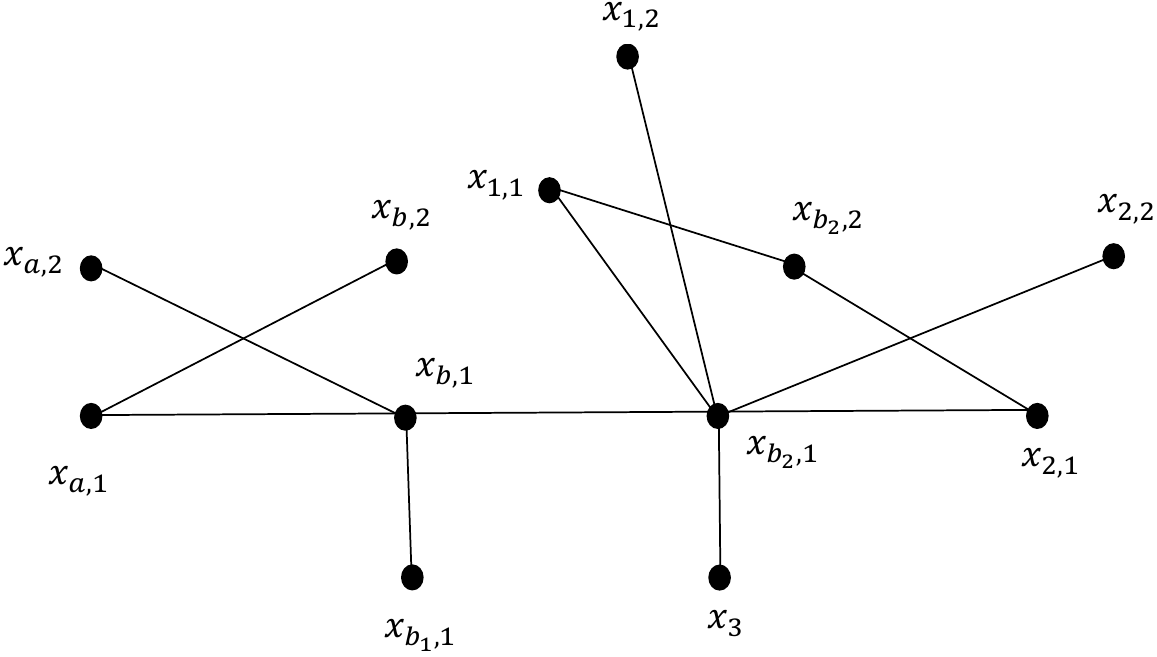}
		\caption*{$G(2,1,1,1,2,2)$}
	\end{figure}
\end{minipage}
\end{minipage}
\begin{minipage}{\linewidth}
	\begin{minipage}{.5\linewidth}
		\begin{figure}[H]
			\centering
			\includegraphics[scale=0.55]{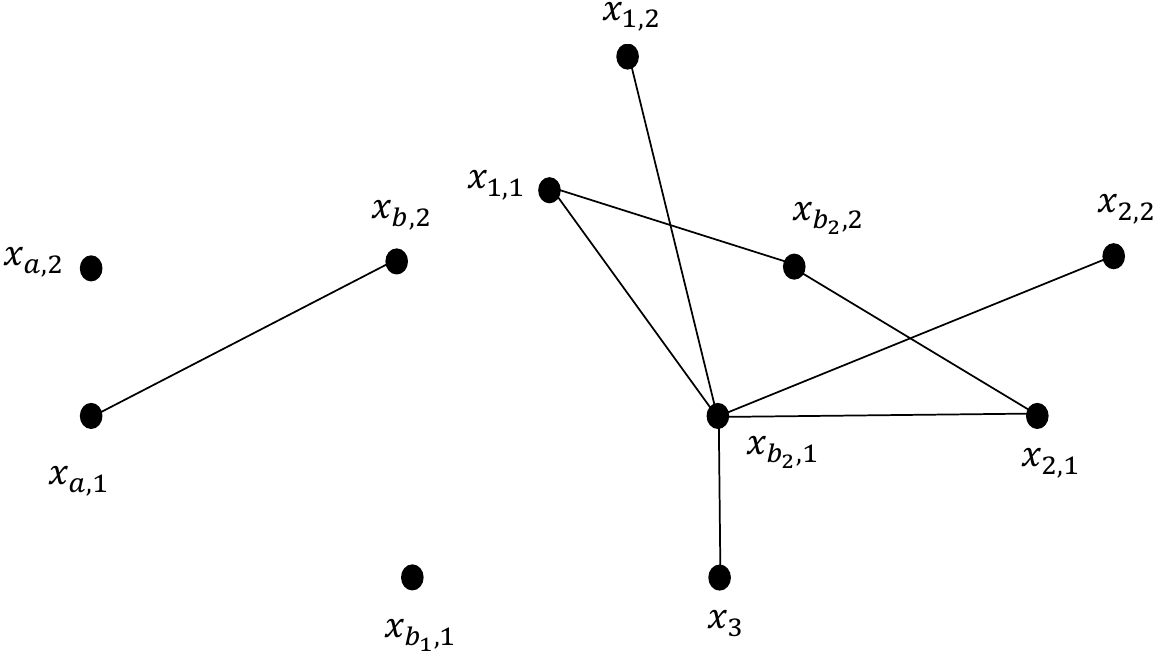}
			\caption*{$G(2,1,1,1,2,2)\setminus \{x_{b,1}\}$}
		\end{figure}
	\end{minipage}
	\begin{minipage}{.4\linewidth}
		\begin{figure}[H]
			\centering
			\includegraphics[scale=0.55]{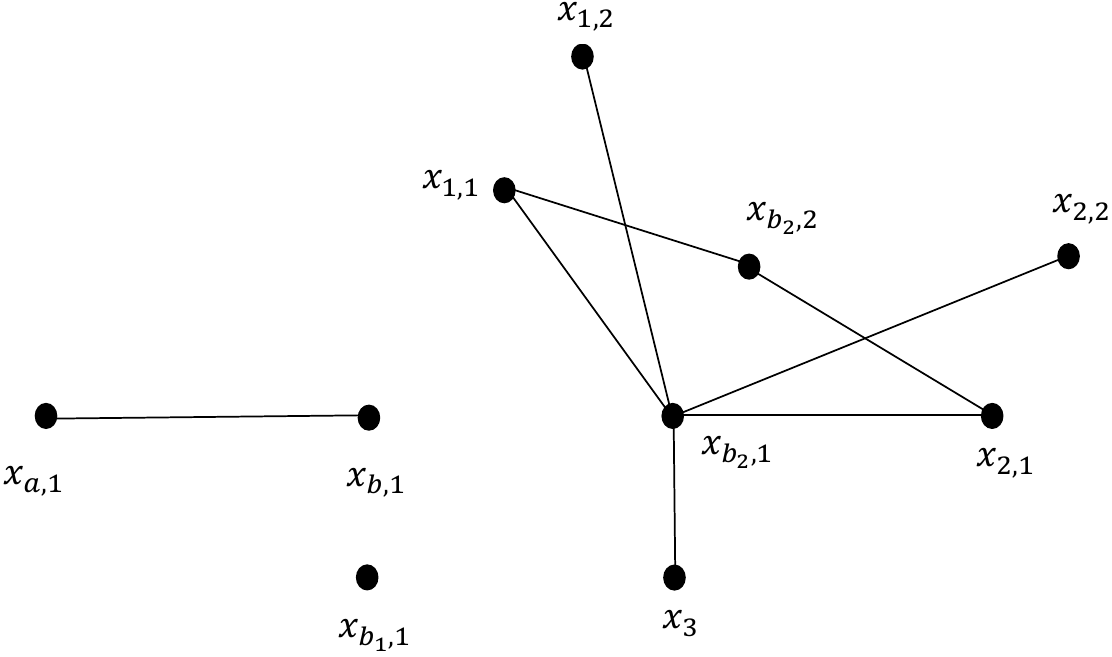}
			\caption*{$G(1,0,0,1,2,2)$}
		\end{figure}
	\end{minipage}
\end{minipage}

\begin{lemma}\label{neigh}
Let $G$ be a tree on the vertex set $V(G)=\{x_1,\ldots,x_n  \}$ and the edge set $E(G)=\{e_1,\ldots,e_{n-1}  \}$ with notations as in Remark \ref{treeRem}. Then $\left(G(\mathbf{k}_{n-1}) \setminus N_{G(\mathbf{k}_{n-1})}[x_{b,1}]\right)^\circ$ is isomorphic to $$\left( G \setminus N_G[x_b] \right)^{\circ}(\widehat{\mathbf{k}}_{n-1})\cup \bigcup\limits_{\substack{q\in \{s+1,\ldots,r\}, \\ j_q \in \{i_{q-s-1}+1,\ldots,i_{q-s}\}, \\ k_{j_q}-k_{c_q}>0}} e_{j_q}(k_{j_q}-k_{c_q}),$$ where $\widehat{\mathbf{k}}_{n-1}$ is obtained from ${\bf k}_{n-1}$ by deleting components $k_{j}'s$ corresponding to deleted edges $e_j's$, where $j \in \{c_1,\ldots,c_r\}\cup \bigcup\limits_{q=s+1}^{r}\{i_{q-s-1}+1,\ldots,i_{q-s}\}.$ 

\end{lemma}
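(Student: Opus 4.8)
The plan is to verify the isomorphism directly from the Construction, by bookkeeping the effect of deleting $N_{G(\mathbf{k}_{n-1})}[x_{b,1}]$ (and then the resulting isolated vertices) one edge-block $e_i(k_i)$ at a time. \emph{First} I would pin down the closed neighbourhood of $x_{b,1}$: since $e_{c_0},\dots,e_{c_r}$ are the only edges of $G$ incident to $x_b$, the vertex $x_{b,1}$ lives only inside the blocks $e_{c_q}(k_{c_q})$, inside $e_{c_q}(k_{c_q})$ it is adjacent exactly to $x_{b_q,1},\dots,x_{b_q,k_{c_q}}$ (the condition $1+m\le k_{c_q}+1$), and no two copies of $x_b$ are ever adjacent; hence $N_{G(\mathbf{k}_{n-1})}[x_{b,1}]=\{x_{b,1}\}\cup\bigcup_{q=0}^r\{x_{b_q,m}:1\le m\le k_{c_q}\}$.

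\emph{Next} I would partition $E(G)$ into three types: (i) the edges $e_{c_0},\dots,e_{c_r}$; (ii) the edges $e_{j_q}$ with $q\in\{s+1,\dots,r\}$ and $j_q\in\{i_{q-s-1}+1,\dots,i_{q-s}\}$; and (iii) the rest. Using that $G$ is a tree, one checks that the only edges of $G$ meeting $N_G[x_b]$ are those of types (i) and (ii) (an edge joining two vertices of $N_G[x_b]$ would give a short cycle through $x_b$), so type (iii) is precisely $E(G\setminus N_G[x_b])$ and $\widehat{\mathbf{k}}_{n-1}$ is the restriction of $\mathbf{k}_{n-1}$ to it; moreover each type (ii) edge $e_{j_q}=\{x_{b_q},w\}$ has $w\notin N_G[x_b]$, and $w$ sits on at most one type (ii) edge. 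Now delete $N_{G(\mathbf{k}_{n-1})}[x_{b,1}]$ and the resulting isolated vertices block by block: for a type (iii) edge none of the vertices of $e_j(k_j)$ is deleted, so $e_j(k_j)$ survives intact and these blocks assemble to $(G\setminus N_G[x_b])^{\circ}(\widehat{\mathbf{k}}_{n-1})$; for a type (i) edge every $x_{b_q}$-copy of $e_{c_q}(k_{c_q})$ is deleted, leaving only $x_b$-copies, which — as copies of $x_b$ occur only in type (i) blocks — become globally isolated and vanish, so type (i) contributes nothing.

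\emph{The heart of the matter} is the type (ii) blocks. In $e_{j_q}(k_{j_q})$ the deleted vertices are exactly $x_{b_q,1},\dots,x_{b_q,k_{c_q}}$, all $w$-copies remaining; the block survives nontrivially iff $k_{j_q}-k_{c_q}>0$, and then the shift $x_{b_q,l}\mapsto x_{b_q,\,l-k_{c_q}}$ (fixing the $w$-copies) turns the surviving adjacencies $\{x_{b_q,l},w_m\}$ with $l+m\le k_{j_q}+1$ and $l>k_{c_q}$ into $\{x_{b_q,l'},w_m\}$ with $l'+m\le(k_{j_q}-k_{c_q})+1$, i.e. identifies this piece with $e_{j_q}(k_{j_q}-k_{c_q})$. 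To conclude I would glue: the type (iii) part and the type (ii) pieces share only the copies of the vertices $w$ occurring as far endpoints of type (ii) edges, the shift uses the same amount $k_{c_q}$ for all blocks at $x_{b_q}$ (so it is globally well defined), and every index involved starts at $1$, so these copies of $w$ glue consistently; taking the union of the surviving pieces and comparing with the non-isolated vertices of $G(\mathbf{k}_{n-1})\setminus N_{G(\mathbf{k}_{n-1})}[x_{b,1}]$ gives the claimed isomorphism.

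I expect the main obstacle to be exactly this last stretch — the type (ii) analysis together with the gluing: one has to be sure the index shift $l\mapsto l-k_{c_q}$ is compatible with the way the $w$-copies are shared with $(G\setminus N_G[x_b])^{\circ}(\widehat{\mathbf{k}}_{n-1})$, and must keep precise track of which copies of $w$ and of $x_b$ survive versus become isolated, since these isolated copies are exactly what the various $(\ )^{\circ}$ operations remove on both sides (the only mildly annoying degenerate case being components $k_i=0$, handled by the $e(0)$ convention).
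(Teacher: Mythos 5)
Your proposal is correct and follows essentially the same route as the paper's proof: compute $N_{G(\mathbf{k}_{n-1})}[x_{b,1}]$, observe that only the blocks of edges incident to $N_G[x_b]$ are affected, and identify the surviving part of each block $e_{j_q}(k_{j_q})$ with $e_{j_q}(k_{j_q}-k_{c_q})$ via the index shift on copies of $x_{b_q}$ while fixing all other copies (the paper's map $f(x_{b_q,t})=x_{b_q,k_{c_q}+t}$ is exactly the inverse of your shift). Your block-by-block bookkeeping of which copies of $x_b$, the $x_{b_q}$'s and the far endpoints $w$ become isolated is, if anything, slightly more careful than the published argument.
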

\begin{proof}

 For  simplicity we denote  $G'=\left(G(\mathbf{k}_{n-1}) \setminus N_{G(\mathbf{k}_{n-1})}[x_{b,1}]\right)^\circ, G_1=\left( G \setminus N_G[x_b] \right)^{\circ}(\widehat{\mathbf{k}}_{n-1}),$\\
 $ G_2=  \bigcup\limits_{q=s+1}^r \bigcup\limits_{j_q=i_{q-s-1}+1}^{i_{q-s}} e_{j_q}(k_{j_q}-k_{c_q}),$ and $G''=G_1 \cup G_2.$ 
Observe that $N_{G(\mathbf{k}_{n-1})}[x_{b,1}]=\bigcup\limits_{q=0}^r\{x_{b_q,l}:1 \leq l \leq k_{c_q}  \} \cup \{x_{b,1}\}$. 
For $p \in [n]$ such that $x_p \in \left(   G \setminus N_G[x_b] \right)^{\circ},$ let $e_{a_{p-1}+1},\ldots,e_{a_p}$ be edges incident on $x_p$ in $G\setminus N_G[x_b]$. Then we define $q_p=\max \{k_{a_{p-1}+1},\ldots,k_{a_p}\}.$ Note that
$V(G'')=V_1 \sqcup V_2,$ where $V_1=\{x_{p,q}\in G(\mathbf{k}_{n-1}):x_p \in  \left(   G \setminus N_G[x_b] \right)^{\circ}, q \leq q_p\}$ and $V_2=V(G_2).$ Now define a map $f:V(G'') \rightarrow V(G')$ by $f(x)=x$ if $x \in V_1$ and if $x\in V_2\setminus V_1$, then $x=x_{b_q,t}$ for some $t$ and $f(x)=x_{b_q,k_{c_q}+t}$. Then $f$ is an isomorphism between $G''$ and $G'$.
\end{proof}
\begin{theorem}\label{tree}
Let $G$ be a tree on the vertex set $V(G)=\{x_1,\ldots,x_n \}$ and the edge set $E(G)=\{e_1,\ldots,e_{n-1}  \}$ with notations as in Remark \ref{treeRem}. Then $G(\mathbf{k}_{n-1})$ is a vertex decomposable graph, where $\mathbf{k}_{n-1} \in \mathbb{N}^{n-1}$.
\end{theorem}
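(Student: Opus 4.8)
The plan is to prove the statement by induction on the number of vertices $n$ of the tree $G$, using the combinatorial characterization of vertex decomposability for graphs together with the two structural lemmas (Remark~\ref{treeRem} and Lemma~\ref{neigh}) that have just been set up. The base case ($n=1$, or more generally when $G(\mathbf{k}_{n-1})$ has no edges, which happens precisely when every $k_i=0$) is immediate since a graph with no edges is vertex decomposable by definition. For the inductive step, I would fix the leaf $x_a$, its neighbour $x_b$, and all the attendant notation from Remark~\ref{treeRem}, and propose the vertex $x_{b,1} \in V(G(\mathbf{k}_{n-1}))$ as the shedding vertex. Note first that if $k_{c_0}=\cdots=k_{c_r}=0$, then in $G(\mathbf{k}_{n-1})$ the subtree hanging at $x_b$ contributes only isolated vertices and one reduces to a smaller tree; otherwise $k=\max\{k_{c_0},\dots,k_{c_r}\}>0$ and $x_{b,1}$ is genuinely a vertex of positive degree, so this is the case to treat.

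The verification then has three parts. First, $G(\mathbf{k}_{n-1}) \setminus \{x_{b,1}\}$ is vertex decomposable: by Remark~\ref{treeRem}(ii), after removing isolated vertices this graph is isomorphic to $\left(G(k_1,\dots,k'_{c_0},\dots,k'_{c_r},\dots,k_{n-1})\right)^{\circ}$, which is of the form $H(\mathbf{k'}_{n-1})$ for the \emph{same} tree $G$ but with a strictly smaller parameter tuple; one would want to set this up so the induction is on $\sum_i k_i$ (with the vertex count fixed) rather than on $n$ alone, or run a double induction — I will comment on this below. Since deleting isolated vertices does not affect vertex decomposability, the inductive hypothesis applies. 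Second, $G(\mathbf{k}_{n-1}) \setminus N_{G(\mathbf{k}_{n-1})}[x_{b,1}]$ is vertex decomposable: by Lemma~\ref{neigh} this graph (up to isolated vertices) is a disjoint union of $\left(G\setminus N_G[x_b]\right)^{\circ}(\widehat{\mathbf{k}}_{n-1})$ — which is $H(\cdot)$ for the forest $\left(G\setminus N_G[x_b]\right)^{\circ}$, a disjoint union of trees each with strictly fewer vertices than $G$ — together with graphs of the form $e_{j_q}(k_{j_q}-k_{c_q})$. One checks that each $e(p)$ is itself vertex decomposable (it is a "staircase" bipartite graph; a short direct argument, e.g. using the vertex $x_{i,1}$, handles it), and a disjoint union of vertex decomposable graphs is vertex decomposable; so the inductive hypothesis applied componentwise gives the claim.

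Third, and this is the shedding condition: for every independent set $C$ in $G(\mathbf{k}_{n-1}) \setminus N_{G(\mathbf{k}_{n-1})}[x_{b,1}]$ one must produce a neighbour $y \in N_{G(\mathbf{k}_{n-1})}(x_{b,1})$ with $C \cup \{y\}$ independent in $G(\mathbf{k}_{n-1}) \setminus \{x_{b,1}\}$. Here I would take $y = x_{b_0,1}=x_{a,1}$, the copy of the leaf-vertex: since $x_a$ has degree $1$ in $G$, in $G(\mathbf{k}_{n-1})$ the only neighbours of $x_{a,1}$ are the vertices $x_{b,m}$ with $m\le k_{c_0}$, all of which lie in $N_{G(\mathbf{k}_{n-1})}[x_{b,1}]$ and hence are outside $C$; therefore $C\cup\{x_{a,1}\}$ is independent. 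One must also confirm $x_{a,1}\in N_{G(\mathbf{k}_{n-1})}(x_{b,1})$, which holds exactly because $k_{c_0}\ge 1$ in the case under consideration (if $k_{c_0}=0$ one instead picks the index achieving the maximum $k$, using that leaf's copy, and the same argument works since that vertex also has degree $1$ in $G$).

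The main obstacle I anticipate is purely bookkeeping: getting the induction to close cleanly. The deletion step reduces the parameter sum but keeps the tree, while the link step reduces the tree but may introduce the auxiliary graphs $e(p)$ and changes the parameters in an uncontrolled-looking way. The cleanest fix is to phrase the statement to be proved as "$H(\mathbf{k}_t)$ is vertex decomposable for every forest $H$ and every $\mathbf{k}_t\in\mathbb{N}^t$", and induct on the pair $(|V(H)|,\ \sum_i k_i)$ lexicographically (or on $|V(H)| + \sum_i k_i$); then the deletion step strictly decreases the second coordinate with the first fixed, and the link step strictly decreases the first coordinate, so both recursive calls are legitimate, and the disjoint-union and $e(p)$-decomposability observations are absorbed into the forest formulation. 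Verifying that $e(p)$ is vertex decomposable, and that vertex decomposability is insensitive to isolated vertices and closed under disjoint union, are the small self-contained lemmas one should record before running the induction.
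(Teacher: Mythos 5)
Your overall strategy is the same as the paper's: the shedding vertex $x_{b,1}$, the deletion step handled by Remark~\ref{treeRem}(ii), the link step handled by Lemma~\ref{neigh}, and an induction on the weights (the paper inducts on $N=\sum_i k_i$; your lexicographic induction on $\bigl(|V(H)|,\sum_i k_i\bigr)$ over forests, together with the explicit small lemmas on $e(p)$, disjoint unions and isolated vertices, is a legitimate and in fact more carefully bookkept version of the same argument). The genuine gap is in your verification of the shedding condition. You take $y=x_{a,1}$ and assert that its neighbours, the vertices $x_{b,m}$ with $m\le k_{c_0}$, all lie in $N_{G(\mathbf{k}_{n-1})}[x_{b,1}]$. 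This is false as soon as $k_{c_0}\ge 2$: distinct copies of one and the same vertex of $G$ are never adjacent in $G(\mathbf{k}_{n-1})$ (every edge of $e(p)$ joins copies of the two \emph{different} endpoints of an edge of $G$), so $x_{b,2},\dots,x_{b,k_{c_0}}$ do not belong to $N_{G(\mathbf{k}_{n-1})}[x_{b,1}]$ and survive into the link. Concretely, $C=\{x_{b,2}\}$ is an independent set of $G(\mathbf{k}_{n-1})\setminus N_{G(\mathbf{k}_{n-1})}[x_{b,1}]$, and since $1+2\le k_{c_0}+1$ the pair $\{x_{a,1},x_{b,2}\}$ is an edge, so $C\cup\{x_{a,1}\}$ is not independent and your chosen $y$ fails. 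The repair is to use the \emph{top} copy of the leaf, $y=x_{a,k_{c_0}}$: the condition $l+m\le k_{c_0}+1$ with $l=k_{c_0}$ forces $m=1$, so its unique neighbour is $x_{b,1}$, giving $N[x_{a,k_{c_0}}]\subseteq N[x_{b,1}]$; this is exactly how the paper certifies that $x_{b,1}$ is a shedding vertex (via the simplicial-type lemma it cites from \cite{DE}).

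Your fallback for the case $k_{c_0}=0$ (``pick the index achieving the maximum $k$ and use that leaf's copy'') is also not sound as stated: the maximum can be attained at an edge $e_{c_q}$ with $q>s$, i.e.\ at a \emph{non-leaf} neighbour $x_{b_q}$ of $x_b$, and copies of $x_{b_q}$ have neighbours coming from the edges $e_{j_q}$ that lie outside $N[x_{b,1}]$; indeed, for a path $x_1x_2x_3x_4$ with weights $(0,1,2)$ the vertex $x_{2,1}$ is not a shedding vertex at all, since its only neighbour $x_{3,1}$ is adjacent to $x_{4,1}$, which lies in the link. The clean way to close this case is to discard weight-zero edges first: up to isolated vertices (which do not affect vertex decomposability), $G(\mathbf{k}_{n-1})$ equals $H(\mathbf{k}')$ where $H$ is the forest of positive-weight edges, and then one may choose a leaf of $H$, so that $k_{c_0}\ge 1$ and the argument above applies. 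With these two corrections the rest of your proposal goes through and coincides with the paper's proof.
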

\begin{proof}
We may assume that $n>1$. Now we use induction on $N=\sum\limits_{i=1}^{n-1}k_i $. If $N=0$, then $G(\mathbf{k}_{n-1})$ is a collection of isolated vertices, and hence vertex decomposable. We therefore suppose that  $N>0$. Observe that $N_{G(\mathbf{k}_{n-1})}[x_{a,k_{c_0}}] \subset N_{G(\mathbf{k}_{n-1})}[x_{b,1}]$. Thus,  by \cite[Lemma 4.2]{DE}, it is enough to show that $G(\mathbf{k}_{n-1})  \setminus \{x_{b,1}  \}$ and $G(\mathbf{k}_{n-1}) \setminus N_{G(\mathbf{k}_{n-1})}[x_{b,1}]$ are vertex decomposable. By Lemma \ref{neigh} and induction on $N$, it follows that  $G(\mathbf{k}_{n-1}) \setminus N_{G(\mathbf{k}_{n-1})}[x_{b,1}]$ is vertex decomposable. The fact that  $G(\mathbf{k}_{n-1})  \setminus \{x_{b,1}  \}$  is vertex decomposable follows from Remark \ref{treeRem} and induction on $N$.
\end{proof}
We denote $G(\mathbf{k})=G(k_1,\ldots,k_t)$ with $k_i=k$ for all $i$. 
Using the concept of polarizaton of a monomial ideal, Fakhari in \cite{Fakhari} showed that $J(G(\mathbf{k}))=\widetilde{J(G)^{(k)}}$, where $k \in \mathbb{N}_{>0}$. In the following, we prove Conjecture \ref{conjecture} for trees.

\begin{corollary}\label{treeCor}
	Let $G$ be a tree. Then for every $k \in \mathbb{N}_{>0}$, $J(G)^{k}$ has linear quotients, and hence it is componentwise linear. 
\end{corollary}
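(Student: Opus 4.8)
The plan is to reduce the statement to the already-established Theorem \ref{tree} via polarization and the theory relating vertex-decomposability, sequential Cohen-Macaulayness, and componentwise linearity. First I would recall that for a bipartite graph — and in particular a tree — one has $J(G)^{(k)}=J(G)^k$ for all $k\in\mathbb{N}_{>0}$, by the result of \cite{GRV2005} cited in the introduction; so it suffices to treat symbolic powers. Next, invoke Fakhari's identity $J(G(\mathbf{k}))=\widetilde{J(G)^{(k)}}$ stated just above: the polarization of $J(G)^{(k)}$ is exactly the vertex cover ideal of the graph $G(\mathbf{k})$. By Theorem \ref{tree}, $G(\mathbf{k})=G(k,\ldots,k)$ is a vertex decomposable graph (this is the special case of the theorem where all $k_i$ are equal).

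Now I would chain the standard implications. Since $G(\mathbf{k})$ is vertex decomposable, it is shellable, hence $S'/I(G(\mathbf{k}))$ is sequentially Cohen-Macaulay (the implications recalled in the introduction), where $S'$ is the polynomial ring on the vertices of $G(\mathbf{k})$. By Lemma \ref{HH} (Herzog--Hibi), the Alexander dual $I(G(\mathbf{k}))^{\vee}=J(G(\mathbf{k}))=\widetilde{J(G)^{(k)}}$ is componentwise linear. Finally, componentwise linearity (and more strongly, linear quotients) is preserved under depolarization: a monomial ideal has linear quotients if and only if its polarization does, and likewise for componentwise linearity. Hence $J(G)^{(k)}=J(G)^k$ has linear quotients, and in particular is componentwise linear.

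The one point that needs a little care — and which I expect to be the main thing to nail down rather than a deep obstacle — is the last step: deducing that $J(G)^k$ has \emph{linear quotients} (not merely componentwise linearity) from the fact that $G(\mathbf{k})$ is vertex decomposable. The cleanest route is to observe that vertex decomposability of $\Delta(G(\mathbf{k}))$ yields a shelling order on the facets of $\Delta(G(\mathbf{k}))$, which by the Eagon--Reiner/Herzog--Hibi dictionary corresponds to an ordering of the minimal generators of $J(G(\mathbf{k}))=\widetilde{J(G)^{(k)}}$ giving linear quotients; one then checks that this ordering descends to an ordering of the generators of $J(G)^{(k)}$ with linear quotients, using that polarization induces a degree-preserving bijection on minimal generators under which colon ideals correspond compatibly. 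Alternatively, if one only wants componentwise linearity, the chain through Lemma \ref{HH} already suffices and the linear-quotients claim can be extracted from the explicit shedding vertices used in the proof of Theorem \ref{tree}. I would write up the componentwise-linear conclusion first as the headline, then add the linear-quotients refinement.
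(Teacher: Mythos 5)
Your proposal is correct and follows essentially the same route as the paper: Theorem \ref{tree} gives vertex decomposability of $G(\mathbf{k})$, hence shellability, hence linear quotients for the Alexander dual $J(G(\mathbf{k}))=\widetilde{J(G)^{(k)}}$, which descends through depolarization to $J(G)^{(k)}$, and finally $J(G)^{(k)}=J(G)^k$ since trees are bipartite. The only step you flag as needing care — that linear quotients survive depolarization — is exactly what the paper handles by citing \cite[Lemma 3.5]{Fakhari}, so no new argument is required there.
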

\begin{proof}
	In view of Theorem \ref{tree}, $G(\mathbf{k})$ is vertex decomposable, and hence shellable. Now using \cite[Theorem 8.2.5]{Herzog'sBook}, $J(G(\mathbf{k}))=\widetilde{J(G)^{(k)}}$ has linear quotients. By \cite[Lemma 3.5]{Fakhari}, $J(G)^{(k)}$ has a linear quotient, and hence by \cite[Theorem 8.2.15]{Herzog'sBook} componentwise linear. Now, the result follows from \cite[Corrolary 2.6]{GRV2005}.
\end{proof}
For ${\bf k}_t\in \mathbb{N}^{t}$, we have proved that $G({\bf k}_t)$ is vertex decomposable if $G$ is forest. Thus, it is natural to ask what are other classes of graphs for which $G({\bf k}_t)$ is vertex decomposable. In the following examples, we show that this is not true for bipartite vertex decomposable and chordal graphs.

\begin{example}\label{example}
Let $G$ be a graph on the vertex set $V(G)=\{x_1,x_2,x_3,x_4,x_5 \}$ and the edge set $E(G)=\{e_1=\{x_1,x_5\},e_2=\{x_1,x_2\}, e_3=\{x_2,x_3\},e_4=\{x_3,x_4\},e_5=\{x_1,x_4\} \}$ as shown in the figure. Then the fact $N_G[x_5] \subset N_G[x_1]$ implies that $x_1$ is a shedding vertex of $G$. Also, it is easy to see that $G \setminus \{x_1\}$ and $G \setminus N_G[x_1]$ are both vertex decomposable, and hence $G$ is a vertex decomposable graph. But $G(1,2,1,1,2)$ is not a vertex decomposable graph. To verify this note that $x_{11}$ is the unique shedding vertex of $G(1,2,1,1,2)$. However, $G(1,2,1,1,2) \setminus \{ x_{11}\}$ is a cycle $C_4$ which is not vertex decomposable.
\[
\begin{tikzpicture}[scale=1.7]
\draw (0,0)-- (0,1);
\draw (1,1)-- (0,1);
\draw (0,1)-- (0,0);
\draw (1,0)-- (1,1);
\draw (2,1)-- (1,1);
\draw (0,0)-- (1,0);
\begin{scriptsize}
\fill  (0,0) circle (1.5pt);
\draw[below] (0,0) node {$x_3$};
\fill  (0,1) circle (1.5pt);
\draw[above] (0,1) node {$x_2$};
\fill  (1,1) circle (1.5pt);
\draw[above] (1,1) node {$x_1$};
\fill  (1,0) circle (1.5pt);
\draw[below] (1,0) node {$x_4$};
\fill  (2,1) circle (1.5pt);
\draw[above] (2,1) node {$x_5$};
\end{scriptsize}
\draw[below] (1,-.2) node {$G$};
\draw[below] (5,-.2) node {$G(1,2,1,1,2)$};
\draw (4,1)-- (5,1);
\draw (5,0)-- (5,1);
\draw (4,0)-- (5,0);
\draw (4,1)-- (4,0);
\draw (5,1)-- (6,1);
\draw (4,2)-- (5,1);
\draw (4.48,0.5)-- (5,0);
\draw (4,1)-- (4.48,0.5);
\draw (6,0)-- (5,1);
\begin{scriptsize}
\fill  (4,1) circle (1.5pt);
\draw[above] (4,1) node {$x_{2,1}$};
\fill  (5,1) circle (1.5pt);
\draw[above] (5,1) node {$x_{1,1}$};
\fill (5,0) circle (1.5pt);
\draw[right] (5,0) node {$x_{4,1}$};
\fill (4,0) circle (1.5pt);
\draw[below] (4,0) node {$x_{3,1}$};
\fill (6,1) circle (1.5pt);
\draw[above] (6,1) node {$x_{5,1}$};
\fill (4,2) circle (1.5pt);
\draw[above] (4,2) node {$x_{2,2}$};
\fill (4.48,0.5) circle (1.5pt);
\draw (4.5,0.66) node {$x_{1,2}$};
\fill (6,0) circle (1.5pt);
\draw[right] (6,0) node {$x_{4,2}$};
\end{scriptsize}
\end{tikzpicture}
\]
\end{example}

\begin{example}\label{example2}
Let $G$ be a graph on the vertex set $V(G)=\{x_1,x_2,x_3,x_4,x_5 \}$ and the edge set 
$E(G)=\{e_1=\{x_1,x_5\},e_2=\{x_1,x_2\}, e_3=\{x_2,x_3\},e_4=\{x_3,x_4\},e_5=\{x_1,x_4\}, e_6=\{x_1,x_3 \} \}.$ Then $G$ is a chordal graph. Proceeding as in the above example one can check that $G(1,2,1,1,2,1)$ is not a vertex decomposable graph. 

\[
\begin{tikzpicture}[scale=1.7]
\draw (0,0)-- (0,1);
\draw (1,1)-- (0,1);
\draw (0,1)-- (0,0);
\draw (1,0)-- (1,1);
\draw (2,1)-- (1,1);
\draw (1,1)-- (0,0);
\draw (0,0)-- (1,0);
\begin{scriptsize}
\fill  (0,0) circle (1.5pt);
\draw[below] (0,0) node {$x_3$};
\fill  (0,1) circle (1.5pt);
\draw[above] (0,1) node {$x_2$};
\fill  (1,1) circle (1.5pt);
\draw[above] (1,1) node {$x_1$};
\fill  (1,0) circle (1.5pt);
\draw[below] (1,0) node {$x_4$};
\fill  (2,1) circle (1.5pt);
\draw[above] (2,1) node {$x_5$};
\end{scriptsize}
\draw[below] (1,-.2) node {$G$};
\draw[below] (5,-.2) node {$G(1,2,1,1,2,1)$};
\draw (4,1)-- (5,1);
\draw (5,0)-- (5,1);
\draw (4,0)-- (5,0);
\draw (4,1)-- (4,0);
\draw (5,1)-- (6,1);
\draw (4,2)-- (5,1);
\draw (6,0)-- (5,1);
\draw (5,1)-- (4,0);
\draw (4.28,0.52)-- (4,1);
\draw (4.28,0.52)-- (5,0);
\draw (4.28,0.52)-- (5,0);
\begin{scriptsize}
\fill  (4,1) circle (1.5pt);
\draw[above] (4,1) node {$x_{2,1}$};
\fill  (5,1) circle (1.5pt);
\draw[above] (5,1) node {$x_{1,1}$};
\fill (5,0) circle (1.5pt);
\draw[right] (5,0) node {$x_{4,1}$};
\fill (4,0) circle (1.5pt);
\draw[below] (4,0) node {$x_{3,1}$};
\fill (6,1) circle (1.5pt);
\draw[above] (6,1) node {$x_{5,1}$};
\fill (4,2) circle (1.5pt);
\draw[above] (4,2) node {$x_{2,2}$};
\fill (6,0) circle (1.5pt);
\draw[right] (6,0) node {$x_{4,2}$};
\fill (4.28,0.52) circle (1.5pt);
\draw[above](4.38,0.52) node {$x_{1,2}$};
\end{scriptsize}
\end{tikzpicture}
\]
\end{example}

The following example illustrate the fact that if $G$ is a vertex decomposable graph, then $G(\mathbf{k})$ need not be vertex decomposable.
\begin{example}\label{example3}
Let $G$ be a graph on the vertex set $V(G)=\{x_1,x_2,x_3,x_4\}$ and the edge set 
$E(G)=\{e_1=\{x_1,x_2\}, e_2=\{x_2,x_3\},e_3=\{x_3,x_4\},e_4=\{x_1,x_4\}, e_5=\{x_1,x_3 \} \}.$ Then $G$ is a vertex decomposable graph but $G(\mathbf{2})$ is not a vertex decomposable graph. 

\[
\begin{tikzpicture}[scale=1.7]
	\draw (0,0)-- (0,1);
	\draw (1,1)-- (0,1);
	\draw (0,1)-- (0,0);
	\draw (1,0)-- (1,1);
	\draw (1,1)-- (0,0);
	\draw (0,0)-- (1,0);
	\begin{scriptsize}
		\fill  (0,0) circle (1.5pt);
		\draw[below] (0,0) node {$x_3$};
		\fill  (0,1) circle (1.5pt);
		\draw[above] (0,1) node {$x_2$};
		\fill  (1,1) circle (1.5pt);
		\draw[above] (1,1) node {$x_1$};
		\fill  (1,0) circle (1.5pt);
		\draw[below] (1,0) node {$x_4$};
	\end{scriptsize}
	\draw[below] (1,-.2) node {$G$};
	\draw[below] (5,-.5) node {$G({\bf 2})$};
	\draw (4,1)-- (5,1);
	\draw (5,0)-- (5,1);
	\draw (4,0)-- (5,0);
	\draw (4,1)-- (4,0);
	\draw (5,1)-- (4,0);
	\draw (3.5,1.5)-- (5,1);
	\draw (3.5,1.5)-- (4,0);
	\draw (5.5,1.5)-- (4,1);
	\draw (5.5,1.5)-- (5,0);
	\draw (4,0)to[out=75, in=170] (5.5,1.5);
	\draw (3.5,-.5)-- (4,1);
	\draw (3.5,-.5)to[out=330, in=240] (5,0);
	\draw (5.5,-.5)-- (4,0);
	\draw (5.5,-.5)-- (5,1);
	\draw (5,1)to[out=255, in=10] (3.5,-.5);
	\begin{scriptsize}
		\fill  (4,1) circle (1.5pt);
		\draw[above] (4,1) node {$x_{2,1}$};
		\fill  (5,1) circle (1.5pt);
		\draw[above] (5,1) node {$x_{1,1}$};
		\fill (5,0) circle (1.5pt);
		\draw[below right ] (5,0) node {$x_{4,1}$};
		\fill (4,0) circle (1.5pt);
		\draw[below] (4,0) node {$x_{3,1}$};
		\fill  (3.5,1.5) circle (1.5pt);
		\fill  (5.5,1.5) circle (1.5pt);
		\fill  (5.5,-.5) circle (1.5pt);
		\fill  (3.5,-.5) circle (1.5pt);
		\draw[above] (5.5,1.5) node {$x_{1,2}$};
		\draw[above] (3.5,1.5) node {$x_{2,2}$};
		\draw[below] (3.5,-.5) node {$x_{3,2}$};
		\draw[below] (5.5,-.5) node {$x_{4,2}$};
	\end{scriptsize}
\end{tikzpicture}
\]

\end{example}
\section{Powers of vertex cover ideals of unicyclic graphs}\label{section4}

From Example \ref{example}, we observe that if $G$ is a unicyclic graph on $n$ vertices, then $G({\bf k}_n)$ need not be vertex decomposable.
On the other hand, if $G$ is a unicyclic vertex decomposable graph with cycle $C_n$, $n\neq 3,5$, then we show that  $G(\mathbf{k})$ is a vertex decomposable graph (see Theorem \ref{unicyclic} ). The following results will be useful in proving the main result of this section.
\begin{lemma}[Selvaraja, \cite{Selva2019}]\label{uni1}
Let $G$ be a graph and $\{x_1,\ldots,x_m \}\subset V(G)$. Set $\psi_0=G,$ $\psi_{i}=\psi_{i-1} \setminus \{x_i \}$, $\phi_i= \psi_{i-1} \setminus N_{\psi_{i-1}}[x_i]$ for all $1 \leq i \leq m $. Then $G$ is a vertex decomposable graph if it satisfies the following:
\begin{enumerate}[\rm i)]
\item $x_i$ is a shedding vertex of $\psi_{i-1}$ for all $1 \leq i \leq m$,
\item $\phi_i$ is vertex decomposable for all $1 \leq i \leq m$, and 
\item $\psi_m$ is vertex decomposable.
\end{enumerate}
\end{lemma}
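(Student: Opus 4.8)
The plan is to prove the statement by induction on $m$, peeling off the vertices $x_1,x_2,\dots,x_m$ one at a time from the front. The base case $m=0$ requires nothing: conditions (i) and (ii) are then vacuous and (iii) says $\psi_0=\psi_m=G$ is vertex decomposable.

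For the inductive step, assume $m\ge 1$ and that the statement holds for sequences of length $m-1$. I would use $x_1$ as the vertex witnessing vertex decomposability of $G$, so I need three things: that $x_1$ is a shedding vertex of $G$, that $G\setminus N_G[x_1]$ is vertex decomposable, and that $G\setminus\{x_1\}$ is vertex decomposable. The first is exactly hypothesis (i) for $i=1$ applied to $\psi_0=G$ — and this in particular forces $x_1\in V(G)$, since a shedding vertex is by definition a vertex of the graph, so the whole construction is legitimate. The second is hypothesis (ii) for $i=1$, because $\phi_1=\psi_0\setminus N_{\psi_0}[x_1]=G\setminus N_G[x_1]$. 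For the third, I would apply the induction hypothesis to the graph $\psi_1=G\setminus\{x_1\}$ together with the shortened sequence $x_2,\dots,x_m$: running the recursion of the statement starting from $\psi_1$ reproduces exactly the graphs $\psi_2,\dots,\psi_m$ and $\phi_2,\dots,\phi_m$, so conditions (i)--(iii) for the pair $(\psi_1;x_2,\dots,x_m)$ are precisely conditions (i)--(iii) for $(G;x_1,\dots,x_m)$ restricted to indices $\ge 2$, which hold by assumption. Hence $\psi_1$ is vertex decomposable, and therefore so is $G$ by the definition of a vertex decomposable graph.

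I do not expect a genuine obstacle here; the argument is essentially bookkeeping. The only points that need a little care are (a) verifying that the recursively defined sequences $(\psi_i)$ and $(\phi_i)$ are compatible with passing to the induced subgraph $\psi_1$ — that is, that $\psi_1\setminus\{x_2\}$, $\psi_1\setminus N_{\psi_1}[x_2]$, and so on coincide with the original $\psi_2,\phi_2,\dots$, which is immediate from the definitions — and (b) noting that hypothesis (i) is what guarantees each $x_i$ is an honest vertex of $\psi_{i-1}$, so that both the recursion and the notion of shedding vertex make sense at every step.
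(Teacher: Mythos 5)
Your induction is correct and complete: the case $i=1$ of hypotheses (i) and (ii) supplies the shedding vertex $x_1$ and the vertex decomposability of $G\setminus N_G[x_1]$, while the induction hypothesis applied to $\psi_1$ with the shortened sequence $x_2,\dots,x_m$ gives $G\setminus\{x_1\}$, which is exactly what the definition of a vertex decomposable graph requires. The paper itself offers no proof of this lemma (it is quoted from Selvaraja's work), and your argument is the standard peeling-off induction used there, so there is nothing to flag beyond the bookkeeping you already note.
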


In \cite{FKY}, Mohammadi, Kiani and Yassemi give a complete description of vertex decomposable unicyclic graphs which is noted in the following lemma.
\begin{lemma}[Mohammadi, Kiani and Yassemi, \cite{FKY}]\label{uni2}
Let $G$ be a unicyclic graph with cycle $C_n$, $n\neq 3,5$. Then $G$ is vertex decomposable if and only if  at least one whisker is attached to $C_n$. 
\end{lemma}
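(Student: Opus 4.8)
The plan is to prove the two implications separately. The forward implication (a whisker on $C_n$ forces vertex decomposability) is short; the converse is the substantial part. For the forward implication, suppose $v\in V(C_n)$ has a leaf neighbour $w$. Then $N_G[w]=\{v,w\}\subseteq N_G[v]$, so by \cite[Lemma 4.2]{DE} it suffices to show that $G\setminus\{v\}$ and $G\setminus N_G[v]$ are vertex decomposable. But $G$ has a unique cycle, and deleting the cycle vertex $v$ destroys it, so $G\setminus\{v\}$ and its subgraph $G\setminus N_G[v]$ are forests; forests are vertex decomposable, so we are done. (If $G$ is disconnected one first passes to the component containing $C_n$, using the standard fact that a disjoint union of graphs is vertex decomposable if and only if every component is.) This direction does not use the hypothesis $n\neq 3,5$.

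For the converse I would prove the contrapositive by strong induction on $|V(G)|$: if $G$ is unicyclic with cycle $C_n$, $n\neq 3,5$, and no vertex of $C_n$ has a leaf neighbour, then $G$ is not vertex decomposable. The engine is the following \emph{claim}: under these hypotheses no cycle vertex $v_i$ is a shedding vertex of $G$. To prove it, write $N_G(v_i)=\{v_{i-1},v_{i+1},w_1,\dots,w_r\}$ with $w_1,\dots,w_r\notin V(C_n)$; since $C_n$ has no whisker, every $w_j$ has a neighbour other than $v_i$. Removing $N_G[v_i]$ leaves the cycle arc $P\colon v_{i+2}-v_{i+3}-\cdots-v_{i-2}$ on $n-3$ vertices, together with various forests. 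Because $n\neq 3,5$, the arc $P$ is nonempty and its two ends $v_{i+2}$ and $v_{i-2}$ are non-adjacent (they coincide when $n=4$), so $P$ carries a maximal independent set $C_P$ containing both ends. One then enlarges $C_P$ to an independent set $C$ of $G\setminus\{v_i\}$ by choosing, on each tree branch hanging from some $w_j$ or from some cycle vertex, a maximal independent set of that branch that avoids $w_j$ (respectively that blocks the relevant cycle vertex from being added); the no-whisker hypothesis guarantees there is always room for this, since no neighbour of a cycle vertex lying off $C_n$ is a leaf. The resulting $C$ avoids $N_G[v_i]$, so $C\cup\{v_i\}$ is independent in $G$, yet $C$ is a maximal independent set of $G\setminus\{v_i\}$: the vertices $v_{i\pm1}$ are blocked by $v_{i\pm2}\in C$, each $w_j$ by its chosen neighbour, and every other vertex by local maximality. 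Such a $C$ witnesses that $v_i$ is not a shedding vertex, proving the claim.

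Granting the claim, the induction runs as follows. If $G=C_n$, every vertex is a cycle vertex, so $G$ has no shedding vertex and is not vertex decomposable --- the base case. If $G\neq C_n$, assume for contradiction that $G$ is vertex decomposable, and pick a shedding vertex $x$ with $G\setminus\{x\}$ and $G\setminus N_G[x]$ vertex decomposable. By the claim $x\notin V(C_n)$, so $C_n$ lies inside the unicyclic component $H'$ of $G\setminus\{x\}$, which is a vertex decomposable, strictly smaller unicyclic graph with cycle $C_n$; by the inductive hypothesis $H'$ must therefore have a whisker on $C_n$. Hence deleting $x$ created this whisker, which forces a cycle vertex $v_j$ and a vertex $w$ with $N_G(w)=\{v_j,x\}$. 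A short check shows $x$ has no neighbour on $C_n$: a triangle or longer short cycle through $x$, $w$ and an arc of $C_n$ would be a second cycle of the unicyclic graph $G$. Consequently $C_n$ also survives in $G\setminus N_G[x]$; let $H''$ be its unicyclic component, again vertex decomposable and strictly smaller. A parallel argument --- a new leaf at a cycle vertex of $H''$ would again produce a second cycle of $G$, this one through $x$ and $w$ --- shows $H''$ has no whisker on $C_n$, so the inductive hypothesis contradicts the vertex decomposability of $H''$. This contradiction completes the induction and hence the proof.

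The routine part is the forward implication. In the converse, the heart is the claim, and its proof is the main obstacle: one must genuinely construct a maximal independent set of $G\setminus\{v_i\}$ that avoids $N_G[v_i]$, and it is exactly for $n\in\{3,5\}$ that the relevant arc of $C_n$ is too short to host an independent set meeting both of its ends, which is why those two cycles are exceptional. The second, more bookkeeping-heavy obstacle is the dichotomy in the inductive step: deleting the shedding vertex $x$, or the set $N_G[x]$, can create a whisker on $C_n$, but it cannot do so for both $G\setminus\{x\}$ and $G\setminus N_G[x]$ simultaneously, since any configuration producing such a whisker forces a second cycle in the unicyclic graph $G$; turning this observation into a clean case analysis is where most of the effort goes.
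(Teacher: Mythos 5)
Your proposal is correct in substance, but note that the paper never proves this lemma: it is imported verbatim from Mohammadi--Kiani--Yassemi \cite{FKY}, so there is no internal proof to compare against, and what you give is a self-contained argument. Its skeleton is sound: the forward direction via $N_G[w]\subseteq N_G[v]$ and \cite[Lemma 4.2]{DE}, using that deleting a cycle vertex of a unicyclic graph leaves a forest; and for the converse an induction on $|V(G)|$ whose engine is the claim that in a whiskerless unicyclic graph no cycle vertex is a shedding vertex, with $n\neq 3,5$ entering exactly where you say (the arc $C_n\setminus N_G[v_i]$ must carry an independent set containing both of its ends). The inductive step also holds up: a whisker created in $G\setminus\{x\}$ forces a vertex $w$ with $N_G(w)=\{v_j,x\}$, whence unicyclicity shows $x$ has no neighbour on $C_n$ and that no whisker can appear in the cycle component of $G\setminus N_G[x]$, and the standard fact that components of a vertex decomposable graph are vertex decomposable finishes the contradiction. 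Two points of slack in the proof of your claim deserve tightening, though neither is fatal. First, with the paper's definition of shedding vertex you do not need $C$ to be a maximal independent set of $G\setminus\{v_i\}$: it suffices that $C$ is independent, avoids $N_G[v_i]$, and contains a neighbour of every vertex of $N_G(v_i)$, so $C=C_P\cup\{u_1,\dots,u_r\}$, where $u_j$ is any neighbour of $w_j$ other than $v_i$, already works. Second, the compatibility of these choices (each $u_j$ lies outside $N_G[v_i]$, the $u_j$ are pairwise non-adjacent, and none is adjacent to a vertex of the arc) follows from unicyclicity --- any offending adjacency would create a second cycle --- rather than from the no-whisker hypothesis, which is only needed for the $u_j$ to exist; and, as literally written, your instruction to choose on a branch hanging from a cycle vertex an independent set that ``blocks'' that vertex conflicts with the case where that cycle vertex already belongs to $C_P$. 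These are routine repairs and do not affect the correctness of your approach.
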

Now we prove the main theorem of this section.
\begin{theorem} \label{unicyclic}
Let $G$ be a unicyclic graph with cycle $C_n$, $n\neq 3,5$. If $G$ is vertex decomposable, then $G(\mathbf{k})$ is also vertex decomposable.
\end{theorem}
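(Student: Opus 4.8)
The plan is to apply Selvaraja's Lemma \ref{uni1} to a shedding sequence inside $G(\mathbf{k})$ chosen so that every deletion and every link that occurs is the gadget construction of a \emph{forest}; everything then reduces to Theorem \ref{tree}. First I would record the forest analogue of Theorem \ref{tree}: if $F$ is a forest with tree components $T_1,\dots,T_r$, then for any weight vector $\mathbf{m}$ the graph $F(\mathbf{m})$ is the disjoint union of the graphs $T_i(\mathbf{m}_i)$, where $\mathbf{m}_i$ is $\mathbf{m}$ restricted to $E(T_i)$, each summand is vertex decomposable by Theorem \ref{tree}, and a disjoint union of vertex decomposable graphs is again vertex decomposable. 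Since $G$ is vertex decomposable and $n\neq 3,5$, Lemma \ref{uni2} furnishes a whisker $\{v,w\}$ with $w$ a leaf and $v\in V(C_n)$; every edge of $G$ carries the weight $k$, and we may assume $k\ge 1$. Set $x_j:=x_{v,j}$ for $1\le j\le k$, and let $\psi_0=G(\mathbf{k})$, $\psi_j=\psi_{j-1}\setminus\{x_j\}$, and $\phi_j=\psi_{j-1}\setminus N_{\psi_{j-1}}[x_j]$ as in Lemma \ref{uni1}.

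For the shedding condition, in the gadget on the whisker edge one has $x_{w,k-j+1}\sim x_{v,l}$ exactly when $l\le j$, so $N_{G(\mathbf{k})}[x_{w,k-j+1}]=\{x_{w,k-j+1},x_{v,1},\dots,x_{v,j}\}$, and after removing $x_{v,1},\dots,x_{v,j-1}$ this becomes $N_{\psi_{j-1}}[x_{w,k-j+1}]=\{x_{w,k-j+1},x_{v,j}\}\subseteq N_{\psi_{j-1}}[x_{v,j}]$. By the containment criterion \cite[Lemma 4.2]{DE}, used exactly as in the proof of Theorem \ref{tree}, $x_j$ is therefore a shedding vertex of $\psi_{j-1}$ for each $j$.

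The substantive step is to identify the links $\phi_j$ and the terminal deletion $\psi_k$. One checks that $\phi_j=G(\mathbf{k})\setminus\left(\{x_{v,1},\dots,x_{v,j}\}\cup\{x_{z,m}:z\in N_G(v),\ 1\le m\le k-j+1\}\right)$, and that in this graph the copies $x_{v,j+1},\dots,x_{v,k}$ together with all surviving copies of the leaf-neighbours of $v$ are isolated, while each remaining neighbour $z$ of $v$ keeps precisely its top $j-1$ copies. Since $n\neq 3$ and $G$ is unicyclic, $v$ has no two adjacent neighbours, so no edge of the forest $G\setminus\{v\}$ has both endpoints in $N_G(v)$; re-indexing $x_{z,k-j+1+s}\mapsto x_{z,s}$ for $z\in N_G(v)$ then gives
\[
\phi_j^{\circ}\ \cong\ \left(G\setminus\{v\}\right)\!\left(\mathbf{m}_j\right)^{\circ},
\]
where $\mathbf{m}_j$ assigns weight $j-1$ to every edge of $G\setminus\{v\}$ meeting $N_G(v)$ and weight $k$ to every other edge, which for $j=1$ specialises to $\phi_1^{\circ}\cong(G\setminus N_G[v])(\mathbf{k})^{\circ}$. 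Deleting instead all $k$ copies of $v$ gives $\psi_k^{\circ}\cong(G\setminus\{v\})(\mathbf{k})^{\circ}$. As $G\setminus\{v\}$ is a forest, the forest analogue of Theorem \ref{tree} makes all $\phi_j$ and $\psi_k$ vertex decomposable, and Lemma \ref{uni1} then yields that $G(\mathbf{k})$ is vertex decomposable.

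The only genuine obstacle is the combinatorial verification behind the displayed isomorphism for $\phi_j^{\circ}$: one must see that on the copies of an edge $\{z,u\}$ with $z\in N_G(v)$ and $u\notin N_G(v)$, the surviving re-indexed $j-1$ copies of $z$ are joined to those of $u$ by exactly the rule of the gadget $\{z,u\}(j-1)$, that $u$ nevertheless retains all $k$ of its copies for its remaining edges, and that a vertex of $G\setminus\{v\}$ all of whose $G$-neighbours lie in $N_G(v)$ ends up (after discarding isolated vertices) with $j-1$ copies and not $k$. Tracking these overlapping index ranges, and recognising that the weight vector $\mathbf{m}_j$ records them correctly, is where nearly all of the work sits; the hypothesis $n\neq 3$ enters exactly to keep $\mathbf{m}_j$ well defined.
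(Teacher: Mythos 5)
Your proposal is correct and follows essentially the same route as the paper: obtain a whisker on the cycle via Lemma \ref{uni2}, apply Selvaraja's Lemma \ref{uni1} to the successive copies $x_{v,1},\dots,x_{v,k}$ of the cycle vertex of the whisker, verify shedding through the leaf copies via \cite[Lemma 4.2]{DE}, and identify each link $\phi_j$ and the final deletion $\psi_k$ (up to isolated vertices) with the gadget construction over the forest $G\setminus\{v\}$ (resp.\ $G\setminus N_G[v]$), so that Theorem \ref{tree} applies. In fact your explicit weight vector $\mathbf{m}_j$ and the observation that $n\neq 3$ rules out adjacent neighbours of $v$ make the re-indexing step more precise than the paper's own statement of the isomorphism.
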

\begin{proof}
Using Lemma \ref{uni2}, there exists a whisker attached to $C_n$. Further, let $\{x_a,x_b \}$ be a whisker attached to $C_n$ at a vertex $x_b$. We set 
$$ \psi_0=G(\mathbf{k}), \psi_{i}=\psi_{i-1} \setminus \{x_{b,i} \}, \phi_i= \psi_{i-1} \setminus N_{\psi_{i-1}}[x_{b,i}]$$ for all $1 \leq i \leq k.$ 

In order to prove the theorem, first we show that $x_{b,i}$ is a shedding vertex in $\psi_{i-1}$ for $1\leq i\leq k$.
Since $x_b$ is the only vertex which is adjacent to $x_a$ in $G$, it follows from the definition of $G({\bf k})$ that a vertex $y$ is adjacent to $x_{a,k-i+1}$ if and only if $y=x_{b,j}$ for some $j$ with $k-i+1+j\leq k+1$. This implies that $N_{G({\bf k})}(x_{a,k-i+1})=\{x_{b,j}:1\leq j\leq i\}$. From the definition of $\psi_{i-1}=G({\bf k})\setminus\{x_{b,j}:1\leq j\leq i-1\}$, it follows that $x_{b,i}$ is the only vertex which is adjacent to ${x_{a,k-i+1}}$ in $\psi_{i-1}$.
Thus, using \cite[Lemma 4.2]{DE}, we get $x_{b,i}$ is a shedding vertex of $\psi_{i-1}$ for all $1 \leq i \leq k$.

Now, we show that $\phi_i$ is vertex decomposable for $1\leq i\leq k$.
From the definition of $\psi_{i-1}$, observe that $N_{\psi_{i-1}}(x_{b,j})\subset N_{\psi_{i-1}}(x_{b,i})$ for all $j\geq i$. This implies that, for $j>i$, $x_{b,j}$ is an isolated vertex in $\phi_i$. 
Set $H_1=G\setminus \{x_b\}$, $H_2=G\setminus N_G[x_b]$ and assume that $|E(H_1)|=r$. Note that $\phi_1$ is isomorphic to $H_2({\bf k})$ with some isolated vertices, and hence vertex decomposable by Theorem \ref{tree}. For $i\geq 2$, identify  $x_{l,j}$ with $x_{l,j-k-1+i}$ in $\phi_i$ for all $x_l\in N_G(x_b)$. Now, after deleting isolated vertices, we get $\phi_i$ is isomorphic to $H_1(k_{i_1},\dots, k_{i_r})$, where $k_{i_j}\leq k$. Since $H_1$ is forest, by Theorem \ref{tree}, we get that $\phi_i$ is vertex decomposable.

Now, using Lemma \ref{uni1}, it remains to show that $\psi_k$ is vertex decomposable. This follows from Theorem \ref{tree} and the fact that $\psi_k=H_1({\bf k})$, and $H_1$ is a forest.
\end{proof}
As an immediate consequence, we get the following result.  
\begin{corollary}\label{uniCor}
Let $G$ be a unicyclic vertex decomposable graph with cycle $C_n$, $n\neq 3,5$. Then all symbolic powers of $J(G)$ are componentwise linear.	
\end{corollary}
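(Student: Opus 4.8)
The plan is to run the same argument used for Corollary \ref{treeCor}, but with Theorem \ref{unicyclic} in place of Theorem \ref{tree}. First I would fix $k \in \mathbb{N}_{>0}$. Since $G$ is a unicyclic vertex decomposable graph whose cycle $C_n$ satisfies $n \neq 3,5$, Theorem \ref{unicyclic} yields that $G(\mathbf{k})$ is vertex decomposable, and hence shellable; so the independence complex of $G(\mathbf{k})$ is shellable.

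Next I would invoke Fakhari's polarization identity $J(G(\mathbf{k})) = \widetilde{J(G)^{(k)}}$. Because $G(\mathbf{k})$ is shellable, \cite[Theorem 8.2.5]{Herzog'sBook} shows that the vertex cover ideal $J(G(\mathbf{k}))$ has linear quotients; equivalently, the polarization $\widetilde{J(G)^{(k)}}$ has linear quotients. By \cite[Lemma 3.5]{Fakhari}, the property of having linear quotients descends from the polarization of a monomial ideal to the ideal itself, so $J(G)^{(k)}$ has linear quotients, and therefore, by \cite[Theorem 8.2.15]{Herzog'sBook}, $J(G)^{(k)}$ is componentwise linear. As $k$ was arbitrary, every symbolic power of $J(G)$ is componentwise linear, which is the claim.

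I do not expect a genuine obstacle in this corollary itself: every step after invoking Theorem \ref{unicyclic} is a formal application of standard facts (shellable $\Rightarrow$ Alexander dual has linear quotients; descent of linear quotients under polarization; linear quotients $\Rightarrow$ componentwise linear). The substantive work lies in Theorem \ref{unicyclic}, where the whisker copies $x_{b,i}$ are peeled off one at a time via Selvaraja's shedding-sequence criterion (Lemma \ref{uni1}), reducing the unicyclic situation to the forest case handled by Theorem \ref{tree}. Note that, in contrast to the tree case, the conclusion cannot be upgraded from symbolic to ordinary powers: a unicyclic graph with an odd cycle $C_n$ is not bipartite, so the equality $J(G)^{(k)} = J(G)^k$ from \cite{GRV2005} is unavailable.
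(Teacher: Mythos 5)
Your proposal is correct, and the only real input --- Theorem \ref{unicyclic} giving vertex decomposability of $G(\mathbf{k})$, combined with Fakhari's identity $J(G(\mathbf{k}))=\widetilde{J(G)^{(k)}}$ --- is exactly the one the paper uses. The difference is in the formal bridge: the paper's proof of this corollary passes through sequential Cohen--Macaulayness of $G(\mathbf{k})$ and the Herzog--Hibi duality (Lemma \ref{HH}) to get componentwise linearity of $J(G(\mathbf{k}))$, and then depolarizes; you instead replay the paper's own proof of Corollary \ref{treeCor}, going vertex decomposable $\Rightarrow$ shellable $\Rightarrow$ linear quotients for the Alexander dual, descending linear quotients through the polarization via \cite[Lemma 3.5]{Fakhari}, and then invoking linear quotients $\Rightarrow$ componentwise linear. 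Both routes are valid; yours yields the marginally stronger conclusion that $J(G)^{(k)}$ has linear quotients, while the paper's is shorter since it does not need the linear-quotients machinery. Your closing remark about why the conclusion stays at symbolic powers (non-bipartiteness when $n$ is odd blocks $J(G)^{(k)}=J(G)^k$) is also consistent with the paper.
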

\begin{proof}
	For $k\geq 1$, by Theorem \ref{unicyclic}, $G({\bf k})$ is vertex decomposable, and hence sequentially Cohen-Macaulay. Using Lemma \ref{HH}, we get that $J(G({\bf k}))$ is componentwise linear. Since $\widetilde{J(G)^{(k)}}=J(G({\bf k}))$, $J(G)^{(k)}$ is componentwise linear. 
\end{proof}
For a graph $G$, we denote $\deg(J(G))$ is the maximum degree of minimal monomial generators of $J(G)$.
As an application of Corollary \ref{treeCor} and Corollary \ref{uniCor}, we have the following:
\begin{theorem}\hfill{}
	\begin{enumerate}[\rm i)]
		\item Let $G$ be a tree. Then $\reg(J(G)^s)=s\deg(J(G))$ for $s\geq 1$.
		\item Let $G$ be a unicyclic vertex decomposable graph with cycle $C_n$, $n\neq 3,5$. Then $\reg(J(G)^{(s)})=s\deg(J(G))$ for $s\geq 1$.
	\end{enumerate}
\end{theorem}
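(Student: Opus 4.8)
The plan is to reduce both statements to the fact that a componentwise linear ideal has Castelnuovo--Mumford regularity equal to the top degree of a minimal generator, and then to identify that degree.

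First I record the standard fact that if $I\subset S$ is componentwise linear then $\reg(I)=\max\{j:\beta^S_{0,j}(I)\neq 0\}$, i.e.\ $\reg(I)=\deg(I)$ in the notation of the statement (this is the observation quoted in the introduction). Corollary~\ref{treeCor} gives that $J(G)^s$ is componentwise linear for a tree $G$, and Corollary~\ref{uniCor} gives that $J(G)^{(s)}$ is componentwise linear for a unicyclic vertex decomposable $G$ with cycle $C_n$, $n\neq 3,5$; so in both cases the assertion reduces to the purely combinatorial computation $\deg(J(G)^s)=s\deg(J(G))$, respectively $\deg(J(G)^{(s)})=s\deg(J(G))$. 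Throughout, recall that minimal generators of $J(G)$ correspond to minimal vertex covers of $G$, so $\deg(J(G))$ is the maximal cardinality of a minimal vertex cover of $G$.

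For the lower bound, which is uniform, I would fix a minimal vertex cover $C$ of $G$ with $|C|=\deg(J(G))$ and claim $u_C^{\,s}=\bigl(\prod_{x_i\in C}x_i\bigr)^{s}$ is a minimal generator of $J(G)^{(s)}$ (hence also of $J(G)^s$ when the two coincide). It certainly lies in $\bigcap_{\{x_i,x_j\}\in E(G)}\langle x_i,x_j\rangle^s=J(G)^{(s)}$; and for $x_i\in C$ the minimality of $C$ produces an edge $\{x_i,x_k\}$ with $x_k\notin C$, so in $u_C^{\,s}/x_i$ the $x_i$- and $x_k$-exponents are $s-1$ and $0$, summing to less than $s$, whence $u_C^{\,s}/x_i\notin\langle x_i,x_k\rangle^s$. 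Thus $\deg(J(G)^{(s)})\ge s\deg(J(G))$, and likewise for ordinary powers. For the upper bound in part~(i) there is nothing more to do: every minimal generator of $J(G)^s$ is a product of $s$ minimal generators of $J(G)$, so it has degree at most $s\deg(J(G))$. The same argument settles part~(ii) whenever $n$ is even, since then $G$ is bipartite and $J(G)^{(s)}=J(G)^s$ by \cite{GRV2005}.

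The remaining and genuinely harder case is $n$ odd (hence $n\ge 7$), where $G$ is not bipartite and $J(G)^{(s)}$ strictly contains $J(G)^s$. Here I would use the polarization identity $\widetilde{J(G)^{(s)}}=J(G(\mathbf{k}))$ (all entries of $\mathbf{k}$ equal to $s$), so that $\deg(J(G)^{(s)})$ equals the largest cardinality of a minimal vertex cover of $G(\mathbf{k})$. Since $N_{G(\mathbf{k})}(x_{i,l'})\subseteq N_{G(\mathbf{k})}(x_{i,l})$ whenever $l\le l'$, every maximal independent set of $G(\mathbf{k})$ — equivalently the complement of every minimal vertex cover — is an up-set in each column $\{x_{i,1},\dots,x_{i,s}\}$, so a minimal generator of $J(G)^{(s)}$ is recorded by a vector $a\in\mathbb{N}^n$ with $a_i\le s$ and $a_i+a_j\ge s$ on every edge, minimal among such vectors. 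I would then bound $\sum_i a_i$ by $s\deg(J(G))$ by passing to $y=a/s\in[0,1]^n$ — a fractional vertex cover that is tight ($y_i+y_j=1$) at a neighbour of every vertex of its support — partitioning $V(G)$ according to whether $y_i$ equals $1$, lies in $(0,1)$, or equals $0$, and constructing from this an honest minimal vertex cover $C$ of $G$ with $|C|\ge\sum_i y_i$, so that $\sum_i a_i=s\sum_i y_i\le s|C|\le s\deg(J(G))$. The vertices with $y_i=1$ go into $C$, the vertices with $y_i=0$ turn out to be isolated and are discarded, and on the induced subgraph on $\{x_i:0<y_i<1\}$ — which has no isolated vertex — one recurses on the number of vertices, the base case resting on the classical bound that a graph without isolated vertices has a maximal independent set of cardinality at most half its number of vertices. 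I expect the main obstacle to be exactly this recursion: the fractional vertices $0<y_i<1$ can individually carry weight close to $1$, so one must be careful about which of them to place in $C$ and how the induced-subgraph instance interacts with the partition; this is where the real content of part~(ii) lies, parts~(i) and the even-cycle case being formal.
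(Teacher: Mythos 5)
Your overall reduction---componentwise linearity via Corollaries \ref{treeCor} and \ref{uniCor}, then the fact that a componentwise linear ideal has regularity equal to the top degree of a minimal generator---is exactly the route the paper intends (it gives no further argument), and your treatment of part (i) and of the even-cycle case of part (ii) is complete and correct: the lower bound via minimality of $u_C^{\,s}$ inside $J(G)^{(s)}$, and the upper bound because minimal generators of $J(G)^s$ are products of $s$ minimal generators of $J(G)$, together with $J(G)^{(s)}=J(G)^s$ for bipartite $G$.

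The gap is the one you flagged yourself, namely the upper bound $\deg(J(G)^{(s)})\le s\deg(J(G))$ when $n$ is odd, and it cannot be closed: both the ``classical bound'' your recursion rests on and the statement itself are false there. A graph with no isolated vertices need not have a maximal independent set of size at most half its order: let $G$ be $C_7$ with two whiskers attached at each cycle vertex. This $G$ is unicyclic with cycle $C_7$ and has whiskers on the cycle, hence is vertex decomposable by Lemma \ref{uni2}. Every maximal independent set of $G$ consists of an independent set $S_0\subseteq V(C_7)$ together with all pendants at the cycle vertices outside $S_0$, so it has size $14-|S_0|\ge 11$; consequently every minimal vertex cover has size at most $10$, i.e.\ $\deg(J(G))=10$. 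On the other hand, the product of all $21$ variables is a minimal generator of $J(G)^{(2)}$: every edge has exponent sum $2$, while dividing by any variable drops the sum on some pendant edge below $2$. Hence $\beta_{0,21}(J(G)^{(2)})\neq 0$ and $\reg(J(G)^{(2)})\ge 21>20=2\deg(J(G))$, so part (ii) fails for this $G$. More generally, for any $G$ without isolated vertices the monomial $x_1\cdots x_n$ is a minimal generator of $J(G)^{(2)}$, so the asserted formula already forces $G$ to have a maximal independent set of size at most $n/2$; this holds for bipartite graphs (either bipartition class is one), which is why (i) and the even case go through, but it fails for whiskered odd cycles. The correct conclusion available from Corollary \ref{uniCor} is $\reg(J(G)^{(s)})=\deg(J(G)^{(s)})$; identifying the right-hand side with $s\deg(J(G))$ is precisely where your recursion, and the theorem as stated, break down.
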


\renewcommand{\bibname}{References}
\renewcommand{\bibname}{References}
\bibliographystyle{plain}  
\bibliography{refs_reg}

\begin{thebibliography}{10}

\bibitem{BS}
M.~Boij and J.~Söderberg.
\newblock Graded betti numbers of cohen-macaulay modules and the multiplicity
  conjecture.
\newblock {\em Journal of the London Mathematical Society}, 78(1):85--106,
  2008.

\bibitem{CHT}
S.~D. Cutkosky, J.~Herzog, and N.~V. Trung.
\newblock Asymptotic behaviour of the {C}astelnuovo-{M}umford regularity.
\newblock {\em Compositio Math.}, 118(3):243--261, 1999.

\bibitem{DE}
A.~Dochtermann and A.~Engstr\"{o}m.
\newblock Algebraic properties of edge ideals via combinatorial topology.
\newblock {\em Electron. J. Combin.}, 16(2, Special volume in honor of Anders
  Bj\"{o}rner):Research Paper 2, 24, 2009.

\bibitem{Eagon}
J.~A. Eagon and V.~Reiner.
\newblock Resolutions of {S}tanley-{R}eisner rings and {A}lexander duality.
\newblock {\em J. Pure Appl. Algebra}, 130(3):265--275, 1998.

\bibitem{ESF}
D.~Eisenbud and F.~Schreyer.
\newblock Betti numbers of graded modules and cohomology of vector bundles.
\newblock {\em J. Amer. Math. Soc.}, 22(3):859--888, 2009.

\bibitem{Er2}
N.~Erey.
\newblock Powers of ideals associated to {$(C_4,2K_2)$}-free graphs.
\newblock {\em J. Pure Appl. Algebra}, 223(7):3071--3080, 2019.

\bibitem{EQ2019}
N.~{Erey} and A.~A. {Qureshi}.
\newblock {Second powers of cover ideals of paths}.
\newblock {\em arXiv e-prints}, page arXiv:1912.08161, Dec 2019.

\bibitem{fv}
C.~A. Francisco and A.~V. Tuyl.
\newblock Sequentially {C}ohen-{M}acaulay edge ideals.
\newblock {\em Proc. Amer. Math. Soc.}, 135(8):2327--2337 (electronic), 2007.

\bibitem{GRV2005}
I.~Gitler, E.~Reyes, and R.~H. Villarreal.
\newblock Blowup algebras of ideals of vertex covers of bipartite graphs.
\newblock In {\em Algebraic structures and their representations}, volume 376
  of {\em Contemp. Math.}, pages 273--279. Amer. Math. Soc., Providence, RI,
  2005.

\bibitem{HHibi}
J.~Herzog and T.~Hibi.
\newblock Componentwise linear ideals.
\newblock {\em Nagoya Math. J.}, 153:141--153, 1999.

\bibitem{Herzog'sBook}
J.~Herzog and T.~Hibi.
\newblock {\em Monomial ideals}, volume 260 of {\em Graduate Texts in
  Mathematics}.
\newblock Springer-Verlag London, Ltd., London, 2011.

\bibitem{HHO2011}
J.~Herzog, T.~Hibi, and H.~Ohsugi.
\newblock Powers of componentwise linear ideals.
\newblock In {\em Combinatorial aspects of commutative algebra and algebraic
  geometry}, volume~6 of {\em Abel Symp.}, pages 49--60. Springer, Berlin,
  2011.

\bibitem{HRW99}
J.~Herzog, V.~Reiner, and V.~Welker.
\newblock Componentwise linear ideals and {G}olod rings.
\newblock {\em Michigan Math. J.}, 46(2):211--223, 1999.

\bibitem{vijay}
V.~Kodiyalam.
\newblock Asymptotic behaviour of {C}astelnuovo-{M}umford regularity.
\newblock {\em Proc. Amer. Math. Soc.}, 128(2):407--411, 2000.

\bibitem{KKSS}
A.~{Kumar}, R.~{Kumar}, R.~{Sarkar}, and S.~{Selvaraja}.
\newblock {Symbolic powers of certain cover ideals of graphs}.
\newblock {\em arXiv e-prints}, page arXiv:1903.00178, March 2019.

\bibitem{Mohammadi}
F.~Mohammadi.
\newblock Powers of the vertex cover ideal of a chordal graph.
\newblock {\em Comm. Algebra}, 39(10):3753--3764, 2011.

\bibitem{FM}
F.~Mohammadi.
\newblock Powers of the vertex cover ideals.
\newblock {\em Collect. Math.}, 65(2):169--181, 2014.

\bibitem{FKY}
F.~Mohammadi, D.~Kiani, and S.~Yassemi.
\newblock Shellable cactus graphs.
\newblock {\em Math. Scand.}, 106(2):161--167, 2010.

\bibitem{Romer}
T.~R\"{o}mer.
\newblock Note on bounds for multiplicities.
\newblock {\em J. Pure Appl. Algebra}, 195(1):113--123, 2005.

\bibitem{Selva2019}
S.~{Selvaraja}.
\newblock {Symbolic powers of vertex cover ideals}.
\newblock {\em arXiv e-prints}, page arXiv:1908.10576, Aug 2019.

\bibitem{Fakhari}
S.~A. Seyed~Fakhari.
\newblock Symbolic powers of cover ideal of very well-covered and bipartite
  graphs.
\newblock {\em Proc. Amer. Math. Soc.}, 146(1):97--110, 2018.

\bibitem{Fakhari2019}
S.~A. Seyed~Fakhari.
\newblock Regularity of symbolic powers of cover ideals of graphs.
\newblock {\em Collect. Math.}, 70(2):187--195, 2019.

\bibitem{Wood2009}
R.~Woodroofe.
\newblock Vertex decomposable graphs and obstructions to shellability.
\newblock {\em Proc. Amer. Math. Soc.}, 137(10):3235--3246, 2009.

\end{thebibliography}
\end{document}